

\documentclass[11pt]{amsart}
\usepackage{amsmath,amssymb,mathrsfs}
\usepackage{pstricks,pst-plot}
\usepackage[small,heads=LaTeX,nohug]{diagrams}
\usepackage[colorlinks=true,linkcolor=black,citecolor=black,urlcolor=black]{hyperref}

\psset{unit=1pt}
\psset{arrowsize=4pt 1}
\psset{linewidth=.5pt}

\newcommand{\define}{\textbf}

\newcommand{\ul}{\underline}

\newcommand{\isom}{\cong}
\renewcommand{\setminus}{\smallsetminus}
\renewcommand{\phi}{\varphi}

\renewcommand{\tilde}{\widetilde}
\renewcommand{\hat}{\widehat}
\renewcommand{\bar}{\overline}


\newcommand{\Q}{\mathbb{Q}}
\newcommand{\R}{\mathbb{R}}
\newcommand{\Z}{\mathbb{Z}}
\newcommand{\PP}{\mathbb{P}}
\newcommand{\A}{\mathbb{A}}
\newcommand{\G}{\mathbb{G}}

\newcommand{\OO}{\mathcal{O}}

\newcommand{\cL}{\mathcal{L}}

\newcommand{\liet}{\mathfrak{t}}

\newcommand{\XX}{\mathfrak{X}}

\newcommand{\Fl}{{Fl}}

\newcommand{\pr}{\mathrm{pr}}

\DeclareMathOperator{\Aut}{Aut}
\DeclareMathOperator{\Pic}{Pic}
\DeclareMathOperator{\Bl}{Bl}

\newcommand{\Eff}{\overline{\mathrm{Eff}}}
\newcommand{\Nef}{\mathrm{Nef}}

\newtheorem{theorem}{Theorem}[section]
\newtheorem{lemma}[theorem]{Lemma}
\newtheorem{proposition}[theorem]{Proposition}
\newtheorem{corollary}[theorem]{Corollary}

\theoremstyle{definition}

\newtheorem{remark}[theorem]{Remark}
\newtheorem{example}[theorem]{Example}

\begin{document}

\title{Effective divisors on Bott-Samelson varieties}
\author{Dave Anderson}
\address{Department of Mathematics, The Ohio State University, Columbus, OH 43210}
\email{anderson.2804@math.osu.edu}
\date{October 21, 2016}
\thanks{The author was partially supported by NSF Grants DMS-0902967 and DMS-1502201, as well as a postdoctoral fellowship from the Instituto Nacional de Matem\'atica Pura e Aplicada (IMPA)}

\begin{abstract}
We compute the cone of effective divisors on a Bott-Samelson variety corresponding to an arbitrary sequence of simple roots.  The main tool is a general result concerning effective cones of certain $B$-equivariant $\PP^1$ bundles.  As an application, we compute the cone of effective codimension-two cycles on Bott-Samelson varieties corresponding to reduced words.  We also obtain an auxiliary result giving criteria for a Bott-Samelson variety to contain a dense $B$-orbit, and we construct desingularizations of intersections of Schubert varieties.  An appendix exhibits an explicit divisor showing that any Bott-Samelson variety is log Fano.
\end{abstract}

\maketitle

\section{Introduction}\label{s.intro}

The cone of pseudoeffective divisors carries important information about a projective algebraic variety, and plays a key role in the minimal model program.  Computing this cone explicitly, however, is quite difficult; there are few general methods for describing effective divisors.  In general, the pseudoeffective cone may have infinitely many extremal rays; but even for log Fano varieties, where one knows the cone is finitely generated \cite{bchm}, describing a set of generators is a challenging task.

One situation in which the answer is relatively simple is common in applications to representation theory.  Whenever a connected solvable group $B$ acts on a nonsingular projective variety $X$, one knows the effective cone is generated by $B$-invariant divisors: given a $B$-linearized line bundle $\cL$, apply the Lie-Kolchin theorem to $H^0(X,\cL)$ to find a nonzero semi-invariant section whenever $\cL$ has nonzero sections; such a section defines a $B$-invariant divisor.  Sumihiro's theorem guarantees that any line bundle on $X$ can be linearized, so this suffices to prove the claim.  (See \cite{sumihiro}, and the refinement in \cite[Proposition~2.4]{kklv}.  The same reasoning also applies to $\Q$-factorial normal varieties.)  
In the case when $B$ acts with a dense open orbit $U$ whose complement has codimension one, the irreducible components of $X\setminus U$ are precisely the irreducible $B$-invariant divisors, so these generate $\Eff(X)$, possibly redundantly.  Examples of this include toric varieties and flag varieties---and more generally, spherical varieties---as well as Schubert varieties and their equivariant desingularizations.

It is harder to make such general statements for varieties not having a group action with a dense orbit, but the computation of effective cones has become a subject of increasing activity in the last decade, especially in connection with birational geometry and Cox rings.  
The main goal of this article is to describe the effective cones of divisors on Bott-Samelson varieties.  
These include desingularizations of Schubert varieties, but in general they do not admit solvable group actions with dense orbits (Remark~\ref{r.nodense}).  
Bott-Samelson varieties form an infinite family of varieties whose effective cones are of unbounded dimension, and these cones can be described concretely; the main difficulty lies in identifying the correct statement.

In order to state the theorem we quickly review the setup; more can be found in \S\ref{s.bs}.  Consider a reductive (or Kac-Moody) group $G$ with Borel subgroup $B$.  For any sequence $\ul\alpha=(\alpha_1,\ldots,\alpha_d)$ of simple roots and for $i\leq d$, write $w_i = s_{\alpha_1}\star \cdots \star s_{\alpha_i}$ for the Demazure product of the corresponding simple reflections.  The Bott-Samelson variety $X(\ul\alpha)$ comes with $B$-equivariant maps $\phi_i\colon X(\ul\alpha) \to G/B$ for $1\leq i\leq d$, and the image of each $\phi_i$ is the Schubert variety $X(w_i)$.  (We will often write $w=w_d$ and $\phi=\phi_d$.)  The variety $X(\ul\alpha)$ also comes with $d$ ``standard'' $B$-invariant divisors $X_1,\ldots,X_d$, and their classes form a basis for the Picard group (and N\'eron-Severi group).

The sequence $\ul\alpha$ is {\em reduced} if the Demazure product is equal to the ordinary product in the Weyl group; in this case, the map $\phi\colon X(\ul\alpha) \to X(w)$ is birational, and $X(\ul\alpha)$ has a dense $B$-orbit whose complement is the union of the standard divisors $X_i$.  As already pointed out, this means the effective cone of a Bott-Samelson variety $X(\ul\alpha)$ is generated by the standard divisors when $\ul\alpha$ is reduced.  
In general, however, there are effective classes on $X(\ul\alpha)$ not contained in the simplicial cone spanned by $X_1,\ldots,X_d$.

Let $X = X(\ul\alpha)$.  For each $i$, there is a truncation map $\pi_i\colon X \to X(\alpha_1,\ldots,\alpha_i)$.  There is always a unique irreducible component of the 
divisor $\phi_{i}^{-1}X(w_{i}s_{\alpha_{i}})$ which projects surjectively onto $X(\alpha_1,\ldots,\alpha_{i-1})$ via $\pi_{i-1}$.  When $w_{i}  > w_{i-1}$, this divisor is $X_{i}$; when $w_{i} = w_{i-1}$, it is a distinct divisor $\Sigma_{i}$ (Lemma~\ref{l.component}).  The latter case occurs for some $i$ if and only if the word $\ul\alpha$ is non-reduced.

\begin{theorem}\label{t.main}
The effective cone $\Eff(X)$ is generated by the standard divisors $X_1,\ldots,X_d$, together with the divisors $\Sigma_i$ for $i$ such that $w_{i} = w_{i-1}$.  Furthermore, each $\Sigma_i$ spans an extremal ray.
\end{theorem}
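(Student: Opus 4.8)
The plan is to prove the theorem by induction on $d$, using the tower of $B$-equivariant $\mathbb{P}^1$-bundles $\pi=\pi_{d-1}\colon X\to Y:=X(\alpha_1,\ldots,\alpha_{d-1})$ together with a general statement about effective cones of such bundles. The base case $d\le 1$ is immediate. For the inductive step, I first invoke the principle from the introduction: since $B$ is connected and solvable, Lie--Kolchin shows that $\Eff(X)$ is generated by the classes of the irreducible $B$-invariant divisors, so the problem reduces to (a) classifying these divisors and (b) computing their classes in the basis $X_1,\ldots,X_d$. Each such divisor is either vertical (a component of $\pi^{-1}$ of a $B$-invariant divisor on $Y$) or horizontal (dominating $Y$). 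The vertical ones contribute exactly $\pi^*\Eff(Y)$, which by the inductive hypothesis is the cone on $\{X_j\}_{j<d}\cup\{\Sigma_j: j<d,\ w_j=w_{j-1}\}$, using that $\Sigma_j$ for $j<d$ is pulled back from $Y$ along the truncation. Thus everything hinges on the horizontal $B$-invariant divisors.

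The main tool, the general $\mathbb{P}^1$-bundle result, will assert that for these bundles $\Eff(X)=\pi^*\Eff(Y)+\sum\R_{\geq 0}[D]$, the sum over horizontal $B$-invariant prime divisors $D$. To prove it, I would take a $B$-invariant effective divisor $D$, set $a=D\cdot f$ (its degree on a fiber $f$), and analyze $\pi_*\O_X(D-a S)$ for a horizontal section $S$ (e.g.\ $S=X_d$, which is a section of $\pi$): this relative-degree-zero sheaf is a line bundle on $Y$ carrying a nonzero $B$-semi-invariant section, hence defines an effective $B$-invariant divisor on $Y$, exhibiting $D$ as $a[S]+\pi^*(\text{effective})$. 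When $a<0$ the general fiber forces $D$ to contain the relevant horizontal divisor, lowering $a$; iterating reduces to the case $a\ge 0$. The key geometric input is the identification of the horizontal $B$-invariant primes of $\pi$: since $\phi_d$ is $B$-equivariant and $X(w_d s_{\alpha_d})$ is $B$-invariant, the divisor $\phi_d^{-1}X(w_d s_{\alpha_d})$ is $B$-invariant, and by Lemma~\ref{l.component} its horizontal component is $X_d$ when $w_d>w_{d-1}$ and the new divisor $\Sigma_d$ when $w_d=w_{d-1}$. I would then show there are no other horizontal $B$-invariant primes, equivalently that the only $B$-stable loci meeting a general fiber $\cong\mathbb{P}^1$ come from $eB$ and (in the non-reduced case) $s_{\alpha_d}B$, so that the horizontal contribution is $\R_{\geq 0}[X_d]$ in a reduced step and $\R_{\geq 0}[X_d]+\R_{\geq 0}[\Sigma_d]$ in a non-reduced step. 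Assembling the two contributions yields exactly the asserted generating set.

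For extremality of $\Sigma_i$, I would show $\Sigma_i$ is rigid. In the non-reduced case $w_i=w_{i-1}$ the map $\phi_i$ drops dimension, and its restriction to $\Sigma_i$ contracts a covering family of curves $C\subseteq\Sigma_i$ (the fibers of $\Sigma_i\to\phi_i(\Sigma_i)$, which has strictly smaller dimension). For such a covering family the normal-bundle degree $C\cdot\Sigma_i=\deg N_{\Sigma_i/X}|_C$ is negative. Combined with the salience of $\Eff(X)$, this gives a standard rigidity argument: if $[\Sigma_i]=[A]+[B]$ with $A,B$ effective, then for a general member $C$ one has $C\cdot A+C\cdot B=C\cdot\Sigma_i<0$, so $\Sigma_i$ lies in the support of $A$ or $B$; writing that divisor as $\Sigma_i+(\text{effective})$ and using that $\Eff(X)$ contains no line collapses the decomposition, so $[\Sigma_i]$ spans an extremal ray. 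Pulling this back through the tower handles $\Sigma_i$ for $i<d$.

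The step I expect to be the main obstacle is the general $\mathbb{P}^1$-bundle result, and within it the $B$-equivariant fiber-wise positivity analysis: controlling $\pi_*\O_X(D-aS)$ and ruling out horizontal $B$-invariant divisors beyond $X_d$ and $\Sigma_d$. The explicit computation of $[\Sigma_i]$ in the basis $\{X_j\}$, and the verification that in a reduced step the second section fails to be $B$-invariant so that no spurious generator appears, is routine but is where the Cartan integers $\langle\alpha_j,\alpha_i^\vee\rangle$ enter; some care is also needed to confirm the negativity $C\cdot\Sigma_i<0$ underlying the extremality argument.
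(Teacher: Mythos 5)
Your overall frame (induction up the tower of $B$-equivariant $\P^1$-bundles, plus the Lie--Kolchin reduction to $B$-invariant prime divisors) matches the paper's, but both technical mechanisms you rely on in the inductive step are false, and they fail exactly where the theorem has content. First, the classification claim is wrong: the horizontal $B$-invariant prime divisors of $\pi\colon X\to Y$ are \emph{not} only $X_d$ and $\Sigma_d$. Take $G=SL_2$ and $\ul\alpha=(\alpha,\alpha,\alpha)$, so $X=(\P^1)^3$ with $B$ acting diagonally; here $X_3=\{x_2=x_3\}$ and $\Sigma_3=\{x_3=0\}$, but $D=\{x_1=x_3\}$ is a third horizontal $B$-invariant prime divisor. (Its class is $[X_1]+[\Sigma_3]$, so Theorem~\ref{t.main} survives, but your proof provides no mechanism for decomposing the class of such a $D$.) Second, the pushforward argument proves too much: it would show every $B$-invariant prime divisor satisfies $D\sim a X_d+\pi^*(\text{effective})$ with $a=D\cdot f\ge 0$, i.e.\ that $\Eff(X)=\pi^*\Eff(Y)+\R_{\geq 0}[X_d]$, contradicting the very theorem being proved whenever $\Sigma_d$ exists. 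Concretely, for $D=\Sigma_d$ one has $a=1$ and $\O_X(\Sigma_d-X_d)\isom\pi^*\O_Y(D')$ with $D'$ \emph{not} effective---that is precisely the content of Proposition~\ref{p.sigma-extremal}---so $\pi_*\O_X(D-aS)$ has no nonzero sections at all; Lie--Kolchin produces a semi-invariant section only when $H^0\neq 0$, which is the very point at issue. The paper's way around both problems is Proposition~\ref{p.degenerate}: it never attempts to list the $B$-invariant prime divisors. Instead, given an arbitrary one, it degenerates it inside the trivialization $Z^\circ\times E$ by a $\G_m$-action (trivial on the base, attracting on the fiber $E$), and flatness of the resulting family shows that the \emph{class} of $D$ equals $a\Delta+b\Sigma+\pi^*(\text{effective})$, with Lemma~\ref{l.binv} ruling out $\Sigma$-components in the reduced case. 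This degeneration is the missing idea; without it, or some substitute, your inductive step does not close.

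Your extremality argument also fails. The divisors $\Sigma_i$ need not be rigid, and the asserted negativity $C\cdot\Sigma_i<0$ for the contracted covering family is false in general: already for $X(\alpha,\alpha)\isom\P^1\times\P^1$, the divisor $\Sigma_2=\P^1\times\{0\}$ is contracted by $\phi_2$ to a point, yet $\Sigma_2\cdot\Sigma_2=0$ and $\Sigma_2$ moves in a base-point-free pencil; it spans an extremal ray of $\Eff(\P^1\times\P^1)$ nonetheless. So no rigidity-type argument can work uniformly. The paper argues differently: given the generation statement, extremality of $\Sigma$ is equivalent to non-effectivity of the unique class $D'$ on $Z$ with $\Sigma=\Delta+\pi^*D'$ (compare fiber degrees), and this non-effectivity is proved sheaf-theoretically, by showing $H^0\bigl(\O(\tilde\phi^{-1}X(ws_\beta)-\Delta)\bigr)=0$ via Lemma~\ref{l.normal} and the injectivity of a nonzero map of divisorial sheaves on the Schubert variety $X(w)$ (Proposition~\ref{p.sigma-extremal}). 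Some argument of this kind, valid when $\Sigma_i$ is movable, is required.
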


The generating set is not claimed to be minimal: when $w_{i}=w_{i-1}$, the divisor $X_i$ may be redundant.  In the simplest example, $X=X(\alpha,\alpha)\isom \PP^1\times\PP^1$, the effective cone is generated by $X_1$ and $\Sigma_2 = X_2-X_1$.  We will give examples showing that the number of rays of $\Eff(X)$ can be larger than $d=\dim NS(X)_\R$.  This shows that the cones can be non-simplicial, but nonetheless they are not too complicated: the theorem implies a bound of $2d-1$ extremal rays.  (This is close to sharp, since one can find Bott-Samelson varieties of dimension $d$ whose effective cones have $2d-3$ extremal rays; see Example~\ref{ex.a2one}.)

Even when one is primarily interested in reduced words, non-reduced words appear naturally. The standard divisors $X_i\subseteq X(\ul\alpha)$ are themselves isomorphic to Bott-Samelson varieties, and many of them correspond to non-reduced words; having control over their effective cones is useful for running inductive arguments.  For instance, Theorem~\ref{t.main} gives a concrete way to compute the cone of effective cycles of codimension two in $X(\ul\alpha)$, when $\ul\alpha$ is reduced.

\begin{theorem}\label{t.main2}
Suppose $\ul\alpha$ is reduced, and let $X_1,\ldots,X_d$ be the $B$-invariant divisors in $X=X(\ul\alpha)$.  Consider each $\Eff(X_i)$ as a subcone of $\Eff_{d-2}(X)$ via the pushforward $N_{d-2}(X_i)_\R \to N_{d-2}(X)_\R$.  Then $\Eff_{d-2}(X) = \sum_{i=1}^d \Eff(X_i)$, and the right-hand side is computed as in Theorem~\ref{t.main}.  (In particular, $\Eff_{d-2}(X)$ is finitely generated.)
\end{theorem}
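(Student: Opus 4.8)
The plan is to reduce, using the $B$-action, to the problem of understanding $B$-invariant codimension-two subvarieties, and then to observe that each such subvariety is simply a $B$-invariant divisor on one of the standard divisors $X_i$, whose effective cone is already governed by Theorem~\ref{t.main}.

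First I would establish the analogue for cycles of the Lie--Kolchin principle recalled in the introduction: that $\Eff_{d-2}(X)$ is generated by the classes of $B$-invariant subvarieties of codimension two. Given an irreducible $(d-2)$-dimensional subvariety $V\subseteq X$, I would look at the orbit $B\cdot[V]$ inside the relevant component of the Chow variety of $X$ (which exists, as $X$ is smooth and projective). Its closure is an irreducible projective $B$-variety, so Borel's fixed point theorem supplies a $B$-fixed point $[Z]$, corresponding to a $B$-invariant effective cycle $Z$. Since this orbit closure is irreducible, all of its points---in particular $[V]$ and $[Z]$---represent algebraically, hence numerically, equivalent cycles; and because $B$ is connected, each irreducible component of $Z$ is itself $B$-invariant. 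Thus $[V]$ lies in the cone generated by $B$-invariant prime cycles of codimension two.

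Next I would locate these invariant cycles. Since $\ul\alpha$ is reduced, $U=X\setminus\bigcup_i X_i$ is the dense $B$-orbit, so any $B$-invariant irreducible subvariety of positive codimension is disjoint from $U$ (otherwise it would contain $\bar U=X$) and hence contained in $\bigcup_i X_i$; being irreducible, it lies in a single $X_i$. A $B$-invariant prime cycle $W$ of codimension two in $X$ is therefore a codimension-one subvariety of $X_i$, i.e. an effective divisor on $X_i$, and its class lies in the image of the pushforward $N_{d-2}(X_i)_\R\to N_{d-2}(X)_\R$. Combined with the previous step this gives $\Eff_{d-2}(X)\subseteq\sum_i\Eff(X_i)$; the reverse inclusion is immediate, since the pushforward of an effective divisor on $X_i$ is an effective codimension-two cycle on $X$. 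Finally, each $X_i$ is itself a Bott-Samelson variety (generally for a non-reduced word), so Theorem~\ref{t.main} computes $\Eff(X_i)$ and shows it is finitely generated; summing over the finitely many $i$ yields finite generation of $\Eff_{d-2}(X)$.

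I expect the main obstacle to be the first step: one must ensure the degeneration to a $B$-invariant cycle preserves both the numerical class and effectivity. This is why I would run the argument through algebraic equivalence inside the Chow variety rather than through a naive one-parameter limit, and would take care to verify that the components of the resulting $B$-fixed cycle are individually invariant. Once this reduction is in place, the remaining steps are essentially bookkeeping, relying only on the fact that the dense-orbit complement is exactly $\bigcup_i X_i$ and that the $X_i$ are Bott-Samelson varieties to which Theorem~\ref{t.main} applies.
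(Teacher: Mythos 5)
Your proposal is correct and follows essentially the same route as the paper: the paper's proof consists of exactly your two steps, namely the reduction to $B$-invariant cycles via Borel's fixed point theorem on the Chow variety (which the paper isolates as Lemma~\ref{l.binvt}, citing \cite{brion,fmss}) followed by the observation that, in the reduced case, every $B$-invariant codimension-two subvariety is an effective divisor in some $X_i$, to which Theorem~\ref{t.main} applies. Your more detailed treatment of the Chow-variety degeneration (algebraic hence numerical equivalence, invariance of the components of the limit cycle) is precisely the content the paper leaves implicit in Lemma~\ref{l.binvt}.
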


\noindent
The proof is immediate: any effective class in $N_{d-2}(X)$ can be represented by an effective $B$-invariant cycle (see Lemma~\ref{l.binvt}), and any irreducible $B$-invariant subvariety lies inside an irreducible $B$-invariant divisor.  Since $\ul\alpha$ is reduced, the only such divisors in $X$ are the $X_i$, so a codimension-two subvariety of $X$ is an effective divisor in $X_i$, to which Theorem~\ref{t.main} applies.

Theorem~\ref{t.main2} is of particular interest, since little is known about positivity for higher-codimension cycles.  Compared to the nef and effective cones of divisors and curves, the higher-codimension analogues of these notions can exhibit pathological behavior.  Debarre, Ein, Lazarsfeld, and Voisin exhibited examples of abelian varieties with nef classes which are not pseudoeffective \cite{delv}, and Ottem has recently found examples of this phenomenon on hyperk\"ahler fourfolds \cite{ottem}.  
These pathologies do not occur for four-dimensional Bott-Samelson varieties, though.  In \S\ref{s.exs}, a typical example is described in detail, with explicit generators for the nef and effective cones of $2$-cycles.

Non-reduced words also come up in several other contexts.  
If $\ul\alpha$ and $\ul\beta$ are two reduced words, then certain fibers of the map $X(\ul\alpha,\ul\beta) \to G/B$ give desingularizations of certain intersections of Schubert varieties; these varieties are nontrivial exactly when the concatenation $(\ul\alpha,\ul\beta)$ is non-reduced (see \S\ref{s.richardson}).  As another example, when $G=SL_3$, Bott-Samelson varieties for arbitrary sequences of roots are also certain configuration spaces for points and lines in the plane (see Remark~\ref{r.config}).

Theorem~\ref{t.main} is deduced as a direct consequence of a result which compares the pseudoeffective cones of $Z(\beta)$ and $Z$, where $Z$ is any smooth variety with a $B$-equivariant map to $G/B$, and $Z(\beta) \to Z$ is a certain $B$-equivariant $\PP^1$ bundle corresponding to a simple root $\beta$ (Theorem~\ref{t.mainZ}).  In \S\ref{s.b-orbit} and \S\ref{s.richardson}, we give two supplemental results: one is a criterion for $X(\ul\alpha)$ to have a dense $B$-orbit, and the other constructs a desingularization of the (often non-transverse) intersection $X(u) \cap w\cdot X(v)$ of translated Schubert varieties.

\medskip
\noindent
{\it Acknowledgements.}  I thank Ana-Maria Castravet, Mihai Fulger, Emanuele Macr\`i, Edward Richmond, Jenia Tevelev, and Jesper Funch Thomsen for helpful comments and conversations.

\section{Preliminaries and notation}\label{s.prelim}

Fix an algebraically closed ground field.

\medskip

We refer to \cite{kumar} for basic facts about groups and flag varieties.  Throughout, $G$ denotes a Kac-Moody group, with a fixed Borel subgroup $B$ and maximal torus $T$.  Let $W = N_G(T)/T$ be the corresponding Weyl group, $R = R^+ \cup R^- \subseteq \liet^*$ the set of (positive and negative) roots, and $S\subseteq R^+$ the set of simple roots.  The roots of $B$ are $R^+$.  There is also an opposite Borel subgroup $B^-$, whose roots are $R^-$.

Corresponding to each root $\alpha$ there is a coroot $\alpha^\vee\in \liet$.  For each simple root, there is also a fundamental weight $\varpi_\alpha$, defined so that $\langle \varpi_\alpha,\beta^\vee \rangle = \delta_{\alpha,\beta}$.

For each $\alpha \in S$ there is a simple reflection $s_\alpha$, and a minimal parabolic subgroup $P_\alpha \subseteq G$, generated by $B$ and $\dot{s}_\alpha$; the roots of $P_\alpha$ are $R^+ \cup \{-\alpha\}$.

We will fix representatives $\dot{w}\in N(T)$ for Weyl group elements $w\in W$, but the choice will never matter in this article.  The length $\ell(w)$ of a Weyl group element is the minimal number of simple reflections $s_\alpha$ needed to write $w$.  Such an expression is called {\em reduced}, as is the corresponding sequence of simple roots $\alpha$.

The {\em Demazure product} on $W$ is defined by replacing the relation $s_\alpha\cdot s_\alpha=e$ with $s_\alpha\star s_\alpha=s_\alpha$ in the Coxeter presentation.  Equivalently, for any $w\in W$ and any simple root $\alpha$, define $w\star s_\alpha$ to be the longer of $ws_\alpha$ or $w$.  A sequence of simple roots is reduced if and only if the usual product $s_{\alpha_1}\cdots s_{\alpha_\ell}$ is equal to the Demazure product $s_{\alpha_1}\star \cdots \star s_{\alpha_\ell}$.

The flag variety is $G/B$, an (ind-)projective variety.  It decomposes into Schubert cells $X(w)^\circ = B\dot{w}B/B \isom \A^{\ell(w)}$, whose closures are the Schubert varieties $X(w) = \overline{B\dot{w}B/B}$.  The torus $T$ acts on $G/B$ by left multiplication, and the fixed points $(G/B)^T$ are in bijection with the Weyl group.  Abusing notation, we sometimes write $w\in G/B$ to indicate the $T$-fixed point corresponding to $w\in W$, instead of writing $\dot{w}B$.

For any $w\in W$, we define
\[
  B_w = B \cap \dot{w}B\dot{w}^{-1} \qquad\text{and}\qquad B^w = B \cap \dot{w}B^-\dot{w}^{-1},
\]
and similarly
\[
  U_w = U \cap \dot{w}U\dot{w}^{-1} \qquad\text{and}\qquad U^w = B \cap \dot{w}U^-\dot{w}^{-1},
\]
where $U$ and $U^-$ are the unipotent radicals of $B$ and $B^-$, respectively.  Then $B_w$ is the stabilizer (in $B$) of the $T$-fixed point $\dot{w}B\in G/B$, and the map $u \mapsto u\dot{w}B$ defines a $T$-equivariant isomorphism $U^w \xrightarrow{\sim} X(w)^\circ$.

For each root $\alpha\in R$, there is a root subgroup $U_\alpha \isom \mathbb{G}_a$.  There is an isomorphism of algebraic varieties
\begin{align*}
\A^{\ell(w)} \isom \prod_{\alpha\in R^+ \cap w(R^-)} U_\alpha \xrightarrow{\sim} U^w ,
\end{align*}
given by multiplication (in any order).  This is equivariant for the conjugation action of $T$ on both sides.

The reader less familiar with Lie theory is invited to assume $G=SL_n$, with $B$ and $T$ the subgroups of upper-triangular and diagonal matrices.  The Weyl group is $W=S_n$, the simple reflections are the adjacent transpositions, the simple roots are $\alpha_i = e_i-e_{i+1}$, and the minimal parabolic $P_{\alpha_i}$ consists of block upper-triangular matrices where an entry in position $(i+1,i)$ can be nonzero.  (Theorem~\ref{t.main} is already nontrivial for $SL_3$.)

\medskip
Finally, we record a basic fact about invariant cycles.  
Let $B$ be a connected solvable linear algebraic group acting on a smooth projective variety $X$.  As mentioned in the introduction, $B$-invariant divisors suffice to generate the cone of effective divisors.  Substantially more is true.

\begin{lemma}\label{l.binvt}
Let $X$ be a complete irreducible variety of dimension $d$, with an action of the connected solvable group $B$.  Then for any $0\leq i\leq d$, the cone of effective classes in $N_i(X)_\R$ is generated by classes of $B$-invariant $i$-cycles.
\end{lemma}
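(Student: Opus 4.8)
The plan is to reduce to the class of a single irreducible subvariety and then produce a numerically equivalent $B$-invariant cycle by a fixed-point argument on a parameter space of cycles. Since $\Eff_i(X)$ is generated as a cone by the classes $[V]$ of irreducible $i$-dimensional subvarieties $V\subseteq X$, and every $B$-invariant effective cycle is in particular effective, it suffices to show that each such $[V]$ lies in the subcone generated by classes of $B$-invariant cycles.

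First I would form the space $\mathscr{C}$ of effective $i$-cycles on $X$ (a Chow variety), on which $B$ acts, and consider the orbit closure $O=\overline{B\cdot[V]}\subseteq\mathscr{C}$. Being the closure of the image of the irreducible variety $B$ under $b\mapsto[bV]$, the set $O$ is irreducible, hence connected; and it is complete, since the relevant components of $\mathscr{C}$ are complete. Now I would invoke Borel's fixed-point theorem: the connected solvable group $B$ acting on the complete variety $O$ has a fixed point $[Z]$. A $B$-fixed point of $\mathscr{C}$ is exactly a $B$-invariant effective cycle $Z=\sum_j m_j Z_j$ with $m_j>0$; because $B$ is connected it cannot permute the finitely many components $Z_j$ nontrivially, so each $Z_j$ is itself an irreducible $B$-invariant subvariety.

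It remains to identify the classes. Since $O$ is connected, all cycles parametrized by $O$ are algebraically equivalent, and intersection numbers against Cartier divisor classes are constant in an algebraic family; hence every point of $O$ represents one and the same class in $N_i(X)_\R$. In particular $[Z]=[V]$, so $[V]=\sum_j m_j[Z_j]$ is a nonnegative combination of classes of irreducible $B$-invariant subvarieties, as desired. Reassembling over all $V$, the cone $\Eff_i(X)$ is generated by classes of $B$-invariant $i$-cycles.

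The step I expect to require the most care is the construction of the parameter space $\mathscr{C}$ and the verification of its completeness, since $X$ is assumed only complete and need not be projective; one must use a version of the Chow variety (or cycle space) valid for proper $X$, together with the fact that the numerical class is locally constant on it. Underlying the fixed-point argument is the familiar mechanism of degenerating $V$ to an invariant cycle: for a one-parameter subgroup the fixed point is literally a limit $\lim_{t\to 0} t\cdot V$, and Borel's theorem packages the successive limits for all of $B$ at once. One could alternatively avoid the general cycle space by inducting on $\dim B$ through a chain of normal subgroups with one-dimensional ($\G_a$ or $\G_m$) quotients and taking explicit flat limits at each stage, but the orbit-closure argument is cleaner and treats all codimensions uniformly.
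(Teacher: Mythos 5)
Your proposal is correct and follows exactly the route the paper indicates: the paper's own ``proof'' is a two-sentence sketch citing \cite{brion,fmss} and saying the idea is to apply Borel's fixed point theorem to the Chow variety of $X$, which is precisely your orbit-closure argument, fleshed out with the conservation-of-number step identifying the numerical classes. The one caveat you rightly flag---that the Chow variety (and properness of the orbit closure) needs care when $X$ is merely complete rather than projective---is glossed over in the paper as well, and is harmless in its applications, where $X$ is projective.
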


\noindent
This is a straightforward generalization of statements in \cite{brion,fmss}.  The idea in all cases is to apply Borel's fixed point theorem to the Chow variety of $X$.

\section{Divisors on $B$-varieties}\label{s.bdiv}

In this section we will consider a complete nonsingular $B$-variety $Z$, together with a $B$-equivariant map $\phi\colon Z \to G/B$, and compute the effective cone of certain $\PP^1$-bundles on $Z$.

Being a closed irreducible $B$-invariant subvariety of $G/B$, the image $\phi(Z)\subseteq G/B$ is a Schubert variety $X(w)$.  The following simple lemma will be useful.

\begin{lemma}\label{l.smooth}
Let $\phi \colon Z \to X(w) \subseteq G/B$ be as above.  The fiber $\phi^{-1}(w)$ is nonsingular and irreducible.
\end{lemma}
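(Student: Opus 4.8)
The plan is to restrict to the open Schubert cell and then use the $B$-equivariance of $\phi$ to present the relevant piece of $Z$ as a product, one of whose factors is the fiber in question. Since $X(w)^\circ = B\dot wB/B$ is open in the image $X(w) = \phi(Z)$, its preimage $\phi^{-1}(X(w)^\circ)$ is a nonempty open subvariety of $Z$, and is therefore nonsingular. As $\dot wB$ lies in this cell, it suffices to prove that the fiber $F := \phi^{-1}(\dot wB)$ is nonsingular; this fiber is exactly $\phi^{-1}(w)$ in the notation of the statement.

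The main construction I would set up uses the $T$-equivariant isomorphism $U^w \xrightarrow{\sim} X(w)^\circ$, $u\mapsto u\dot wB$, recorded in \S\ref{s.prelim}: every point of the cell is $u\dot wB$ for a unique $u\in U^w$. Restricting the $B$-action on $Z$ to $U^w\subseteq B$, I would define the morphism
\[
  \Psi\colon U^w \times F \to \phi^{-1}(X(w)^\circ), \qquad (u,f)\mapsto u\cdot f,
\]
and claim it is an isomorphism of varieties. Surjectivity and injectivity are formal consequences of equivariance: given $z$ with $\phi(z)=u\dot wB$, the point $u^{-1}\cdot z$ lies in $F$ because $\phi(u^{-1}\cdot z)=u^{-1}\phi(z)=\dot wB$, so $z=\Psi(u,u^{-1}\cdot z)$; and if $u\cdot f=u'\cdot f'$, applying $\phi$ and the uniqueness of the $U^w$-coordinate gives $u=u'$, whence $f=f'$. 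Crucially, the set-theoretic inverse is itself a morphism, namely $z\mapsto(u(z),\,u(z)^{-1}\cdot z)$, where $u(z)\in U^w$ is the coordinate of $\phi(z)$ under $X(w)^\circ\isom U^w$; both components are compositions of morphisms, so $\Psi$ is an isomorphism rather than merely a bijective morphism.

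Finally I would conclude using the behavior of smoothness under products. Because the ground field is algebraically closed, Zariski tangent spaces and local dimensions are additive for a product, so $U^w\times F$ is nonsingular at a point $(u,f)$ precisely when $U^w$ is nonsingular at $u$ and $F$ is nonsingular at $f$. Now $U^w\isom\A^{\ell(w)}$ is everywhere smooth, and $U^w\times F\isom\phi^{-1}(X(w)^\circ)$ is everywhere smooth (being open in the nonsingular variety $Z$); taking the point $e\in U^w$ forces every point of $F$ to be nonsingular. Hence $F=\phi^{-1}(w)$ is nonsingular. The argument is essentially formal once the product decomposition is in hand, so the only step needing genuine care is the verification that $\Psi$ is an isomorphism; I would handle this by exhibiting the explicit inverse above, thereby avoiding any appeal to normality or Zariski's main theorem.
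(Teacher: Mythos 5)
Your proof is correct and follows essentially the same route as the paper: restrict over the open cell $X(w)^\circ$, use the $U^w$-action (via $B$-equivariance of $\phi$) to split $\phi^{-1}(X(w)^\circ)$ as a product $U^w\times\phi^{-1}(w)$, and deduce smoothness of the fiber from smoothness of the open set and of $U^w$. The paper states the product decomposition in one line (citing that $X(w)^\circ$ is a principal homogeneous space for $U^w$); your explicit inverse morphism is simply a spelled-out verification of that same fact.
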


\begin{proof}
Let $Z^\circ = \phi^{-1}X(w)^\circ$.  Since $X(w)^\circ$ is a principal homogeneous space for $U^w\subseteq B$, and the map $Z^\circ \to X(w)^\circ$ is equivariant, we have $Z^\circ \isom \phi^{-1}(w) \times X(w)^\circ$.  The subset $Z^\circ$ is open in $Z$, so it is nonsingular and irreducible; since $X(w)^\circ$ is also nonsingular, the claim follows. \qed
\end{proof}

We fix some notation.  For any simple root $\beta$, let $P_\beta\subset G$ be the corresponding minimal parabolic subgroup, so there is a $\PP^1$-bundle $G/B \to G/P_\beta$.  Write
\[
  \XX(\beta) = G/B \times_{G/P_\beta} G/B
\]
for the fiber product.  

Define $\pi\colon Z(\beta) \to Z$ to be the pullback of the $\PP^1$-bundle $\XX(\beta) \to G/B$.  Thus there is a diagram
\begin{diagram}
  Z(\beta) & \rTo & \XX(\beta) & \rTo & G/B \\
   \dTo^\pi    &      &  \dTo      \\
   Z       & \rTo & G/B.
\end{diagram}
There is a canonical section of the projection $\XX(\beta) \to G/B$, corresponding to the diagonal map $G/B \to G/B \times G/B$.  This induces a canonical section of $\pi$.  We will write $\Delta\subseteq Z(\beta)$ for the image of this section; it is a $B$-invariant divisor.

Writing $X(w,\beta)$ for the restriction of $\XX(\beta)$ to $X(w)\subseteq G/B$, we also have a diagram
\begin{equation}\label{e.basic1}
\begin{diagram}
  Z(\beta) & \rTo^\psi & X(w,\beta) & \rTo^{\mu} & X(w\star s_\beta) \\
   \dTo^\pi    &      &  \dTo_{\bar\pi}      \\
   Z       & \rTo^\phi & X(w),
\end{diagram}
\end{equation}
where $w\star s_\beta$ is the Demazure product (equal to the longer of $w s_\beta$ or $w$).  Let $\tilde\phi\colon Z(\beta) \to X(w\star s_\beta)$ be the composition $\mu\circ\psi$.

Restricting this basic diagram over $X(w)^\circ \subseteq X(w)$, we have
\begin{equation}\label{e.basic2}
\begin{diagram}
 Z^\circ\times E & \rTo & X(w)^\circ\times E & \rTo^{\mu^\circ} &  X(w)^\circ \cup X(ws_\beta)^\circ  \\
   \dTo^{\pi^\circ}          &      &  \dTo      \\
 Z^\circ  & \rTo & X(w)^\circ,
\end{diagram}
\end{equation}
where $E = E_{w,ws_\beta}\isom\PP^1$ is the curve connecting the $T$-fixed points $w$ and $ws_\beta$ in $G/B$, and as already observed in the proof of Lemma~\ref{l.smooth}, $Z^\circ = \phi^{-1}X(w)^\circ$ is isomorphic to $X(w)^\circ \times \phi^{-1}(w)$.  

These two diagrams will be used repeatedly in proving the main results.  It will be helpful to describe \eqref{e.basic2} in more detail first.

The isomorphism $\bar\pi^{-1}X(w)^\circ = X(w)^\circ \times E$ is given by
\[
  {[ u\cdot w, p ]} \mapsto (u\dot{w}B, \dot{w}pB),
\]
where $u\in U^w$ gives a unique representative $u\dot{w}B$ for a point in $X(w)^\circ$, and $p\in P_\beta$.  The map $\mu^\circ\colon X(w)^\circ \times E \to X(w)^\circ \cup X(ws_\beta)^\circ \subseteq X(w\star s_\beta)$ sends $(u \dot{w}B, \dot{w}pB)$ to $u\dot{w}pB$.

The morphism $\mu^\circ$ can be described more precisely.  Assume for the moment that $w>ws_\beta$, so $w(-\beta) \in R^+$.  For $u'\in U^{ws_\beta}$ and $u''\in U_{w(-\beta)}$, there is an isomorphism
\[
 m\colon X(ws_\beta)^\circ \times U_{w(-\beta)} \times E \xrightarrow{\sim} X(w)^\circ \times E
\]
defined by $m(u'\dot{w}\dot{s}_\beta B, u'', \dot{w}\dot{s}_\beta p'B) = (u'u''\dot{w}B, (u'')^{-1}\dot{w}pB)$, where $p=\dot{s}_\beta p'$.  (This uses the facts that $U^w \isom U^{ws_\beta} \times U_{w(-\beta)}$ and that $U_{w(-\beta)}$ acts on $E$.)

These morphisms fit into a commutative diagram
\begin{equation}\label{e.mu}
\begin{diagram}
 X(w)^\circ \times E & \rTo^{\mu^\circ} & X(w)^\circ \cup X(ws_\beta)^\circ \\
    \uTo_\sim^m  &   &          \uTo_{\bar{\mu}^\circ}^\sim \\
 X(ws_\beta)^\circ \times U_{w(-\beta)} \times E  & \rTo^q  & X(ws_\beta)^\circ \times E,
\end{diagram}
\end{equation}
where $q$ is the projection.  
It follows that $\mu^\circ$ is a (trivial) $\A^1$-bundle when $w>ws_\beta$.  Similar reasoning shows $\mu^\circ$ is an isomorphism when $w<ws_\beta$.

Finally, the action of $B$ on $X(w)^\circ \times E$ can be described as follows.  Writing a point in $X(w)^\circ$ as $u\dot{w}B$, for any $b\in B$, we can write $b=u'b'$ where $u' \in uU^w u^{-1}$ and $b'\in uB_w u^{-1}$.  For $y=\dot{w}pB\in E$, we have
\begin{align*}
  b\cdot (u\dot{w}B, y) &= b\cdot[ u\cdot w, p] \\
  &=  [b u\cdot w, p] \\
  &= [ u'u\cdot w, u^{-1}b'u p] \\
  &= (u'u\dot{w}B, \dot{w}u^{-1}b'upB) \\
  &= (u'u\dot{w}B, b''\dot{w}pB) \\
  &=  (u' u\cdot w, b''\cdot y),
\end{align*}
noting that $u^{-1}b'u \in B_w$, and hence so is $b'' = \dot{w}u^{-1}b'u\dot{w}^{-1}$.

\begin{lemma}\label{l.binv}
Suppose $ws_\beta>w$, let $D\subseteq Z(\beta)$ be a $B$-invariant effective divisor, and let $D^\circ = D \cap (Z^\circ \times E)$ be its restriction to diagram \eqref{e.basic2}.  Then the general fibers of the projection $D^\circ \to Z^\circ$ do not contain $ws_\beta\in E$.
\end{lemma}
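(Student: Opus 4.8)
The plan is to argue by contradiction, reducing the statement to the claim that the ``horizontal'' section of $\pi^\circ$ at the point $ws_\beta$ cannot lie inside a $B$-invariant divisor. Suppose the conclusion fails. Then the set $\{z\in Z^\circ : (z,ws_\beta)\in D^\circ\}$ is dense in $Z^\circ$; being the preimage of the closed set $D^\circ$ under the continuous section $z\mapsto(z,ws_\beta)$, it is also closed, hence equal to all of $Z^\circ$. In other words, the section $\Sigma^\circ := Z^\circ\times\{ws_\beta\}$ is contained in $D^\circ$, and therefore in $D$. It thus suffices to derive a contradiction from the hypothesis $\Sigma^\circ\subseteq D$.

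The key is to exploit the root subgroup $U_{w(\beta)}$. Since $ws_\beta>w$ we have $w(\beta)\in R^+$, and as $\beta\in R^+$ this gives $U_{w(\beta)}\subseteq B\cap\dot w B\dot w^{-1} = B_w$. First I would record that $B_w$ preserves $Z^\circ\times E$: indeed $B_w$ normalizes $U^w$, so it preserves $X(w)^\circ = U^w\cdot w$, and hence, by $B$-equivariance of $\phi$ and $\pi$, it preserves $Z^\circ = \phi^{-1}X(w)^\circ$ and $\pi^{-1}(Z^\circ) = Z^\circ\times E$. Next I would compute the action of $U_{w(\beta)}$ on the fiber $E$ over the base point $w\in X(w)^\circ$. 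Writing $u_{w(\beta)}(t) = \dot w\,u_\beta(t)\,\dot w^{-1}$ and applying the formula for the $B$-action on $X(w)^\circ\times E$ displayed just before the lemma (with $u=e$), one finds $u_{w(\beta)}(t)\cdot ws_\beta = \dot w\,u_\beta(t)\,\dot s_\beta B$, which sweeps out the open Schubert cell of $E\isom P_\beta/B$ and so is nonconstant in $E$. Thus $U_{w(\beta)}$ fixes the base point $w$ but moves the point $ws_\beta\in E$ off itself.

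With these facts the contradiction is a dimension count. Using the product structure $Z^\circ\isom X(w)^\circ\times\phi^{-1}(w)$ from Lemma~\ref{l.smooth}, let $\Sigma = \overline{\Sigma^\circ}\subseteq Z(\beta)$; it is irreducible of dimension $\dim Z$, and since $D$ is closed we have $\Sigma\subseteq D$. Choosing $q = (w,f,ws_\beta)$ for any $f\in\phi^{-1}(w)$, the orbit $U_{w(\beta)}\cdot q$ remains inside $Z^\circ\times E$ (by $B_w$-invariance) but has nonconstant $E$-coordinate, so it is not contained in $\Sigma$. Hence the irreducible set $Y := \overline{U_{w(\beta)}\cdot\Sigma}\subseteq D$ properly contains $\Sigma$, which forces $\dim Y\ge\dim Z + 1 = \dim Z(\beta)$. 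As $Z(\beta)$ is irreducible of this dimension, $Y = Z(\beta)$, giving $D = Z(\beta)$---contradicting that $D$ is a divisor.

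The main obstacle I anticipate is the bookkeeping in the second paragraph: one must verify carefully that $U_{w(\beta)}$ genuinely lies in the fiber-stabilizer $B_w$ and that its action moves $ws_\beta$ while fixing the base point and preserving the whole open set $Z^\circ\times E$, so that the orbit legitimately ``sticks out'' of the section rather than sliding along it. Once the action formula preceding the lemma is specialized to $u=e$, everything else is formal.
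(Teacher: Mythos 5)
Your proof is correct, and it turns on exactly the same key fact as the paper's proof: since $ws_\beta>w$, the root subgroup $U_{w(\beta)}$ lies in $B_w$, fixes $w$, and moves $ws_\beta$ (your formula $u_{w(\beta)}(t)\cdot ws_\beta=\dot{w}u_\beta(t)\dot{s}_\beta B$, sweeping out $E\setminus\{w\}$, is right). Where you diverge is in how the contradiction is extracted. The paper observes that $Z^\circ\times\{ws_\beta\}$ would be an irreducible \emph{component} of the invariant set $D^\circ$, hence itself $B$-invariant (a connected group cannot permute the finitely many components nontrivially), and then uses the displayed action formula to conclude that $Z^\circ\times\{y\}$ is $B$-invariant only for $y\in E^{B_w}=\{w\}$. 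You avoid both the component-invariance principle and the classification of invariant horizontal sections, replacing them by an orbit-sweeping dimension count: $\overline{U_{w(\beta)}\cdot\Sigma}$ is irreducible, strictly contains $\Sigma$, hence has dimension $\dim Z+1=\dim Z(\beta)$, hence equals $Z(\beta)$, which cannot lie in a divisor. Your finish is somewhat more elementary and self-contained; the paper's version has the side benefit of determining exactly which horizontal sections are $B$-invariant, a computation it reuses immediately to define $\Sigma$ in the case $ws_\beta<w$.

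One justification in your write-up is false, although harmlessly so: $B_w$ need not normalize $U^w$. For $G=SL_3$ and $w=s_2$ one has $U_{\alpha_1}\subseteq B_w$ and $U^w=U_{\alpha_2}$, but $u_{\alpha_1}(s)u_{\alpha_2}(t)u_{\alpha_1}(s)^{-1}=u_{\alpha_2}(t)u_{\alpha_1+\alpha_2}(\pm st)\notin U_{\alpha_2}$ for $st\neq 0$. The fact you actually need---that $B$ (hence $B_w$) preserves $Z^\circ\times E$---holds for a simpler reason: $X(w)^\circ=B\dot{w}B/B$ is a single $B$-orbit, so it is $B$-stable, and therefore so are $Z^\circ=\phi^{-1}X(w)^\circ$ and $\pi^{-1}(Z^\circ)=Z^\circ\times E$ by equivariance of $\phi$ and $\pi$. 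With that substitution your argument is complete.
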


\begin{proof}
If the general fiber of $D^\circ \to Z^\circ$ contains $ws_\beta$, then $D^\circ$ must contain $Z^\circ \times \{ws_\beta\}$ as an irreducible component.  By the above calculation, a divisor in $Z^\circ \times E$ of the form $Z^\circ \times \{y\}$ is $B$-invariant if and only if $y$ is fixed by $B_w$.  When $ws_\beta>w$, we have $E^{B_w} = \{w\}$ since in this case the root subgroup $U_{w(\beta)}$ is contained in $B_w$ but not in $B_{ws_\beta}$.\qed
\end{proof}

The standard $B$-invariant divisor $\Delta\subseteq Z(\beta)$ restricts to $\Delta^\circ = Z^\circ\times\{w\}$.  When $ws_\beta<w$, we have $E^{B_w} = \{w,ws_\beta\}$, so we can define a second $B$-invariant divisor
\begin{equation}\label{e.sigma}
  \Sigma = \overline{ Z^\circ \times \{ws_\beta\} } \subseteq Z(\beta).
\end{equation}
Using the $B$-action, this can also be written as $\Sigma = \overline{B\cdot (\phi^{-1}(w)\times \{ws_\beta\})}$. 
The main theorem of this section says that the effective cone of $Z(\beta)$ is generated by that of $Z$, together with $\Delta$ and $\Sigma$ (when the latter exists).

\begin{theorem}\label{t.mainZ}
\begin{enumerate}
\item If $ws_\beta>w$, then $\Eff(Z(\beta))$ is generated by $\pi^*\Eff(Z)$ together with $\Delta$.

\item If $ws_\beta<w$, then $\Eff(Z(\beta))$ is generated by $\pi^*\Eff(Z)$ together with $\Sigma$ and $\Delta$.  In this case $\Sigma$ generates an extremal ray.
\end{enumerate}
\end{theorem}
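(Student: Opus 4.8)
The plan is to use Lemma~\ref{l.binvt} to reduce to $B$-invariant divisors, classify these using the fiber geometry of $\pi$, and then verify the extremality of $\Sigma$ by a contraction argument with $\tilde\phi$. First I would record the easy inclusion: $\pi^*\Eff(Z)$, $\Delta$, and (when $ws_\beta<w$) $\Sigma$ are all effective, so the cone they generate is contained in $\Eff(Z(\beta))$. For the reverse inclusion, by Lemma~\ref{l.binvt} it suffices to show every irreducible $B$-invariant divisor $D\subseteq Z(\beta)$ lies in this cone. Since $\pi$ is a $\P^1$-bundle, either $\pi(D)\neq Z$, in which case $D=\pi^{-1}\pi(D)=\pi^*D'$ for a $B$-invariant (hence effective) divisor $D'$ on $Z$ and $[D]\in\pi^*\Eff(Z)$; or $D$ dominates $Z$, and then $D^\circ:=D\cap(Z^\circ\times E)$ is a nonempty, irreducible, $B$-invariant divisor in $Z^\circ\times E$ that is finite and dominant over $Z^\circ$.

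The heart of the generation statement is to show that such a dominant $D$ must be $\Delta$ (if $ws_\beta>w$) or one of $\Delta,\Sigma$ (if $ws_\beta<w$). The idea is that the general fibers of $D^\circ\to Z^\circ$ are supported on $E^{B_w}$, which the text computes to be $\{w\}$ in case (i) and $\{w,ws_\beta\}$ in case (ii), with $B_w$ acting on the complement of $E^{B_w}$ through a dense orbit; an invariant finite subscheme of $E$ is therefore supported on $E^{B_w}$. Concretely: by Borel's fixed point theorem $B_w$ fixes some point $z_0\in\phi^{-1}(w)$ (a complete variety), so the fiber over $(\dot wB,z_0)\in Z^\circ$ is $B_w$-stable and $D^\circ$ meets it only along $E^{B_w}$; the calculation of the $B$-action preceding Lemma~\ref{l.binv} shows that $U^w$ translates this fiber across the slice $X(w)^\circ\times\{z_0\}$ without moving the $E$-coordinate. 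Because $D^\circ$ is irreducible of dimension $\dim Z^\circ$ and the loci $Z^\circ\times\{y\}$ for $y\in E^{B_w}$ are the only candidate components, one would conclude $D^\circ=\Delta^\circ$, resp.\ $D^\circ\in\{\Delta^\circ,\Sigma^\circ\}$, and hence $D=\Delta$ or $\Sigma$ on taking closures. I expect the main obstacle to be propagating the fiberwise analysis off the $B_w$-fixed slice to the \emph{general} fiber of $D^\circ\to Z^\circ$: when $B_w$ has no dense orbit on $\phi^{-1}(w)$, the group does not sweep $z_0$ over all of $\phi^{-1}(w)$, and one must instead argue that the closure of any horizontal $B$-invariant divisor is forced, by $B$-invariance along the boundary strata where $B_w$ acts nontrivially on $E$, to meet every fiber only along $E^{B_w}$. (Lemma~\ref{l.binv} carries out exactly the part of this analysis that rules out the constant section at $ws_\beta$ in case (i).)

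Finally, for the extremality of $\Sigma$ in case (ii), I would use the map $\tilde\phi\colon Z(\beta)\to X(w\star s_\beta)=X(w)$. From diagram~\eqref{e.basic2} one reads off that $\tilde\phi(\Sigma^\circ)\subseteq X(ws_\beta)^\circ$, so $\tilde\phi$ contracts $\Sigma$ onto the proper Schubert subvariety $X(ws_\beta)\subsetneq X(w)$, while $\tilde\phi|_\Delta$ dominates $X(w)$. Choose a curve $C$ in a general fiber of $\tilde\phi$, cut out by general very ample divisors on $Z(\beta)$; as the fiber and the cutting divisors vary these $C$ sweep out $Z(\beta)$ in a single numerical class, so $[C]$ is nef. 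Since a general fiber lies over $X(w)^\circ$, which is disjoint from $\tilde\phi(\Sigma)$, we get $\Sigma\cdot C=0$; since $\Delta$ meets the general fiber in a nonzero effective divisor we get $\Delta\cdot C>0$; and nefness gives $\pi^*V\cdot C\geq0$ for every effective $V$ on $Z$. Given the generating set just found, $\Sigma$ fails to be extremal only if $[\Sigma]=c[\Delta]+\pi^*[W]$ for some $c\geq0$ and effective $W$ on $Z$; intersecting with $C$ forces $c=0$, whence $[\Sigma]=\pi^*[W]$. But $\Sigma$ is a section of $\pi$ over $Z^\circ$, so $\Sigma\cdot F=1$ for a fiber $F$ of $\pi$, while $\pi^*[W]\cdot F=0$, a contradiction. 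Hence $\Sigma$ spans an extremal ray.
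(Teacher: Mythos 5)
Your reduction to irreducible $B$-invariant divisors and your treatment of the vertical ones are fine, but the step you yourself call the heart of the generation statement --- that an irreducible $B$-invariant divisor $D$ dominating $Z$ must \emph{equal} $\Delta$ (case (i)) or one of $\Delta,\Sigma$ (case (ii)) --- is false, and no refinement of the fiberwise invariance analysis can repair it. Take $G=SL_2$ with simple root $\alpha$, let $Z=X(\alpha,\alpha)$ with $\phi=\phi_2$ (so $w=s_\alpha$), and let $\beta=\alpha$, so that $Z(\beta)=X(\alpha,\alpha,\alpha)\isom(\P^1)^3$ in the configuration coordinates $(x,y,z)=(p_1B,\,p_1p_2B,\,p_1p_2p_3B)$, with $B$ acting diagonally, $\pi(x,y,z)=(x,y)$, $\Delta=\{y=z\}$, and $\Sigma=\{z=eB\}$. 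The divisor $D=\{x=z\}$ (i.e.\ the locus $p_2p_3\in B$) is irreducible, $B$-invariant, and a section of $\pi$, yet it is neither $\Delta$ nor $\Sigma$, nor vertical; its class is $\pi^*[\{eB\}\times\P^1]+[\Sigma]$, so the theorem holds for it, but only up to rational equivalence. In this example the general fiber of $D^\circ\to Z^\circ$ is a single point of $E$ that moves with the base point and generically lies \emph{outside} $E^{B_w}=\{w,ws_\beta\}$. This is exactly the obstacle you flagged and then waved away: the fiber of $D$ over a point $z\in Z^\circ$ is constrained only by $\stab_B(z)$ acting on $\pi^{-1}(z)\isom E$ through the twisted action, and for general $z$ this stabilizer is far smaller than $B_w$ (trivial in the example), so invariance forces nothing. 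Your proposed remedy (``$B$-invariance along the boundary strata \dots\ forces $D$ to meet every fiber only along $E^{B_w}$'') is precisely the false statement; Borel's fixed point theorem controls only the special fibers over the $B_w$-fixed slice.

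The correct statement, and the paper's key idea that your proposal is missing, is Proposition~\ref{p.degenerate}: one does not classify the invariant divisors but \emph{degenerates} them. Choosing a one-parameter subgroup $\lambda\colon\G_m\to T$ acting along the $E$-direction of diagram \eqref{e.basic2}, one flows $D$ and checks that the flat limit $D_0$ is supported on $\Delta\cup\Sigma\cup\pi^{-1}(B\text{-invariant divisors of }Z)$, with $\Sigma$-components excluded in case (i) by Lemma~\ref{l.binv}; since the total family is a divisor in the smooth variety $Z(\beta)\times\A^1$, it is Cohen--Macaulay, hence flat over $\A^1$, giving $D\sim a\Delta+b\Sigma+\pi^*D'$ with $a,b\geq 0$ and $D'$ effective. (In the example above, $\{x=z\}$ degenerates to $\Sigma+\pi^{-1}(\{eB\}\times\P^1)$.) Without this degeneration your proof of generation fails in both cases. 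Your extremality argument for $\Sigma$, on the other hand, is essentially sound once generation is granted: intersecting with complete-intersection curves in general fibers of $\tilde\phi$, which avoid $\Sigma$, meet $\Delta$ positively, and pair nonnegatively with $\pi^*\Eff(Z)$, does force $c=0$ in $[\Sigma]=c[\Delta]+\pi^*[W]$, and pairing with a $\pi$-fiber rules out $[\Sigma]=\pi^*[W]$ (here you are also implicitly using pointedness of the pseudoeffective cone to reduce extremality to $[\Sigma]\notin\mathrm{Cone}(\pi^*\Eff(Z),[\Delta])$, which is fine). That argument is genuinely different from the paper's Proposition~\ref{p.sigma-extremal}, which writes $\Sigma=\Delta+\pi^*D$ and proves $D$ is not effective by a sheaf-theoretic vanishing via pushforward to $X(w)$ and Lemma~\ref{l.normal}; yours is more elementary, but since it presupposes the generating set, the proposal as a whole does not constitute a proof.
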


Quite generally, if $Z$ is a $\Q$-factorial variety and $\pi\colon Z' \to Z$ is a $\PP^1$-bundle, then the N\'eron-Severi groups of $Z$ and $Z'$ are related by a natural short exact sequence $0 \to NS(Z) \xrightarrow{\pi^*} NS(Z') \to \Z\to 0$, and the pullback $\pi^*\Eff(Z)$ is a facet of $\Eff(Z')$.  (A fiber of $\pi$ is a moving curve, and $\pi^*\Eff(Z)$ lies in the orthogonal space to the class of a fiber; in fact, the same argument applies to any fibration whose fiber has Picard number one.)  
Bearing this in mind, Theorem~\ref{t.mainZ} is an immediate consequence of the following proposition, together with Proposition~\ref{p.sigma-extremal}.

\begin{proposition}\label{p.degenerate}
Let $D\subseteq Z(\beta)$ be a $B$-invariant reduced and irreducible effective divisor.  Then $D$ is rationally equivalent to $a\Delta+b\Sigma+ \pi^*D'$ for $a,b\geq 0$ and some effective divisor $D'\subseteq Z$.
\end{proposition}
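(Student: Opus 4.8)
The plan is to combine the $\P^1$-bundle structure of $\pi$ with the two $B$-invariant sections $\Delta$ and $\Sigma$. Write $f$ for the class of a fibre of $\pi$ and set $a = D\cdot f$. Since $f$ is represented by a moving curve, $a\geq 0$, and $a=0$ precisely when $D$ is vertical, i.e.\ $D=\pi^{-1}\pi(D)$; in that case $\pi(D)\subseteq Z$ is an effective divisor and $D=\pi^*\pi(D)$, so the statement holds with $a=b=0$. I may therefore assume $a\geq 1$, so that $D$ dominates $Z$.

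Next I would realise $Z(\beta)=\P(\mathcal{W})$ for a $B$-equivariant rank-two bundle $\mathcal{W}$ on $Z$ (for instance $\mathcal{W}=\pi_*\O(\Delta)$). The point is that $\Delta$ and $\Sigma$ are $B$-invariant sections of $\pi$, so they correspond to $B$-equivariant sub-line-bundles $\cL_\Delta,\cL_\Sigma\subseteq\mathcal{W}$; these are exactly the two $B_w$-fixed points of the fibre $E\cong\P^1$ of \eqref{e.basic2}, using that $ws_\beta<w$ forces both $T$-fixed points $w,ws_\beta$ of $E$ to be fixed by $B_w$ while its unipotent part acts trivially (the dichotomy recorded just before Lemma~\ref{l.binv}). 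In the coordinates of \eqref{e.basic2} the torus $T$ acts on the $E$-factor through a single nontrivial character, with fixed points $w=\Delta\cap E$ and $ws_\beta=\Sigma\cap E$.

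Because $D$ is $B$-invariant, its defining section is a $B$-eigenvector; pushing forward along $\pi$ it becomes a $B$-semi-invariant section $s$ of $\pi_*\O(D)$, which is a twist of $\Sym^a\mathcal{W}$ by the line bundle $M$ determined by $[D]=a[\Delta]+\pi^*M$ (using $NS(Z(\beta))=\Z[\Delta]\oplus\pi^*NS(Z)$). The key step is to peel off $\Delta$ and $\Sigma$: restricting $s$ to the fibre over a $T$-fixed point of $Z$ lying over $w$ (which exists since $\phi^{-1}(w)$ is complete and nonsingular by Lemma~\ref{l.smooth}), the relevant symmetric power of that fibre decomposes into one-dimensional torus weight spaces spanned by the monomials in the two fixed points, so a semi-invariant is forced into a single monomial component $\cL_\Delta^{\,a-b}\otimes\cL_\Sigma^{\,b}$ for some $0\leq b\leq a$. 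Reading this back as an effective line bundle $\O(D')$ on $Z$ then gives $[D]=(a-b)[\Delta]+b[\Sigma]+\pi^*[D']$, the desired conclusion; the extremality of $\Sigma$ in Theorem~\ref{t.mainZ}(ii) is treated separately in Proposition~\ref{p.sigma-extremal}.

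The step I expect to be the main obstacle is the effectivity of the leftover base divisor $D'$. The sections $\Delta$ and $\Sigma$ are disjoint over the open locus $Z^\circ$ but may meet over $Z\setminus Z^\circ$, and there $\cL_\Delta,\cL_\Sigma$ fail to split $\mathcal{W}$; a one-step filtration of $\Sym^a\mathcal{W}$ then produces graded pieces carrying a spurious effective error supported on the locus where the two sections collide. The crux is to show that $B$-semi-invariance genuinely pins $s$ to an honest monomial in $\cL_\Delta,\cL_\Sigma$, rather than merely to a filtration step, so that this error does not enter $D'$. This is exactly where the torus-weight rigidity on fibres over $T$-fixed points of $Z$—as opposed to a purely numerical computation in $NS(Z(\beta))$—is indispensable.
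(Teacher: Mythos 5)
Your proposal has a genuine gap, and it sits exactly where you predict trouble---which means the argument, as written, does not prove the proposition. Two concrete problems. First, your framework requires $\Sigma$ to be an honest section of $\pi$, corresponding to a sub-line-bundle $\cL_\Sigma\subseteq\mathcal{W}$ with locally free quotient. But $\Sigma$ is defined as the closure $\overline{Z^\circ\times\{ws_\beta\}}$, and Lemma~\ref{l.component} only gives that $\pi|_\Sigma$ is \emph{birational}; over $Z\setminus Z^\circ$ nothing guarantees it is single-valued, so $\Sigma$ is in general only a rational section, and a rational section yields only a rank-one subsheaf of $\mathcal{W}$ whose saturation need not be a sub-bundle. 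So $\cL_\Sigma$, the object your monomials are built from, need not exist. Second, even where both line bundles exist, your weight argument is pointwise: evaluating the semi-invariant $s$ on the fiber over a $T$-fixed point shows that $s$ \emph{at that point} lies in a single monomial weight space (or vanishes), but this says nothing about $s$ elsewhere. Promoting it to a global factorization $s\in H^0(\cL_\Delta^{\,a-b}\otimes\cL_\Sigma^{\,b}\otimes M)$---equivalently, to the effectivity of $D'$---would require the splitting $\Sym^a\mathcal{W}\isom\bigoplus_i\cL_\Delta^{\,a-i}\otimes\cL_\Sigma^{\,i}$ to hold globally, and it fails precisely over $Z\setminus Z^\circ$, where $\Delta$ and $\Sigma$ collide. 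Since the class decomposition itself is trivial (from $NS(Z(\beta))=\Z[\Delta]\oplus\pi^*NS(Z)$), the effectivity of $D'$ with $a,b\geq 0$ is the \emph{entire} content of the proposition; deferring it as ``the main obstacle'' means the proof is not there.

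For comparison, the paper's proof sidesteps both issues with a specialization argument. Choose a one-parameter subgroup $\lambda$ with $\langle w(\beta),\lambda\rangle>0$, let $\G_m$ act on the $E$-factor of $Z^\circ\times E$ (trivially on $Z^\circ$), and take the flat limit $D_0$ of the translates $t\cdot D$ as $t\to 0$: the family $\mathcal{D}\subseteq Z(\beta)\times\A^1$ is a divisor in a smooth variety, hence Cohen--Macaulay, so equidimensionality of fibers gives flatness and $D\sim D_0$. The components of $D_0$ are then identified directly: horizontal ones over $Z^\circ$ must be $\Delta$ or $\Sigma$ (the latter only when $ws_\beta<w$, by Lemma~\ref{l.binv}), and all remaining components are pullbacks of $B$-invariant divisors on $Z$. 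Nonnegativity of $a,b$ and effectivity of $D'$ come for free, because a flat limit of an effective cycle is effective---no global splitting or factorization of sections is ever needed. If you want to salvage your approach, the natural fix is to degenerate the section $s$ itself to its extreme weight component under this same $\G_m$-action; but that is the paper's specialization in different clothing.
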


\begin{proof} 
Let $\lambda\colon \G_m \to T$ be a one-parameter subgroup such that $\langle w(\beta),\lambda \rangle >0$, so the induced action of $\G_m$ act on $E$ fixes the points $w$ and $ws_\beta$, and has $\lim_{t\to 0} t\cdot y = w$ for $y\in E\setminus\{w,ws_\beta\}$.  Extend this to an action of $\G_m$ on $Z^\circ \times E$ by letting $\G_m$ act trivially on $Z^\circ$.

Define a family
\[
  \mathcal{D}^* := \{( t\cdot x, t)  \,| \, x\in D^\circ\} \subseteq (Z^\circ\times E) \times \G_m,
\]
and let $\mathcal{D}$ be the (reduced) closure in $Z(\beta) \times \A^1$.  Letting $\G_m$ act on $\A^1$ by extending multiplication on itself, $\mathcal{D}$ is a $\G_m$-invariant divisor in $Z(\beta)\times \A^1$.  For each $t\in\A^1$, write $D_t$ for the fiber of the projection $p\colon \mathcal{D} \to \A^1$; there is a map
\[
 \pi_t \colon D_t \to Z,
\]
which is identified with $\pi \colon D \to Z$ for $t=1$.

Let $\mathcal{D}^\circ = \mathcal{D}\cap (Z^\circ \times E) \times \A^1$, so we have a projection $p^\circ \colon \mathcal{D}^\circ \to \A^1$ with fibers $D^\circ_t = (p^\circ)^{-1}(t)$, as well as maps $\pi^{\circ}_t\colon D^\circ_t \to Z^\circ$.  Note that $D^\circ_t = D_t \cap (Z^\circ \times E)$.  The limit $D_0^\circ$ is a $\G_m$-invariant divisor in $Z^\circ \times E$, and therefore a fiber of $\pi^\circ_0$ is either all of $E$, or else it is contained in $\{w,ws_\beta\} \subseteq E$.

We distinguish three cases for irreducible components of $D_0$.  A component of $D_0$ whose general fiber is $w\in E$ must be equal to $\Delta$.  A component whose general fiber is $ws_\beta\in E$ must be equal to $\Sigma$.  (By Lemma~\ref{l.binv}, the latter case occurs only if $ws_\beta<w$.)  The general fiber of $\pi^\circ_0$ cannot be all of $E$, so the third possibility is that the general fiber is empty. 
A component of $D^\circ_0$ which does not surject onto $Z^\circ$ maps into a $B$-invariant divisor of $Z$; and any component of $D_0$ that is not in $\overline{D^\circ_0}$ must be contained in $\pi^{-1}(Z\setminus Z^\circ)$, so such a component also maps into a $B$-invariant divisor of $Z$.  
It follows that $D_0$ is supported on $\Delta + \Sigma + \pi^{-1}D'$ for some $B$-invariant effective divisor $D'\subseteq Z$, with $\Sigma$ appearing only if $ws_\beta>w$.

The scheme $\mathcal{D}$ is Cohen-Macaulay, since it is a divisor in the nonsingular variety $Z(\beta)\times\A^1$, so equidimensionality of the fibers implies $p$ is flat (e.g. by \cite[Ex.~III.10.9]{hartshorne}).  This proves the proposition.\qed
\end{proof}

The divisor $\Sigma$ has a useful alternative characterization.  In the top row of diagram \eqref{e.basic2}, observe that the image of $\phi^{-1}(w)\times \{ws_\beta\}$ under the composed map $Z^\circ \times E \to X(w)^\circ \times E \to X(w)^\circ \cup X(ws_\beta)^\circ$ is the point $ws_\beta$.  By $B$-equivariance, when $ws_\beta<w$, we see that $\tilde\phi$ maps $\Sigma$ into the divisor $X(ws_\beta)\subseteq X(w)$.

\begin{lemma}\label{l.component}
If $w s_\beta < w$, then $\Sigma$ is the unique irreducible component of the divisor $\tilde\phi^{-1}X(ws_\beta) \subseteq Z(\beta)$ which is mapped birationally to $Z$ by $\pi$.  Furthermore, the scheme $\tilde\phi^{-1}X(ws_\beta)$ is generically reduced along $\Sigma$.
\end{lemma}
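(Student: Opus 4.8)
Since $ws_\beta<w$ we have $w\star s_\beta=w$, so $\tilde\phi$ maps $Z(\beta)$ to $X(w)$ and $X(ws_\beta)$ is a Schubert divisor in the target. The plan is to prove both assertions by restricting everything to the open locus $Z^\circ\times E$ of diagram~\eqref{e.basic2}, where the geometry is completely explicit, and to dispose of the rest by a dimension count over $Z\setminus Z^\circ$. First I would record that $Z^\circ=\phi^{-1}X(w)^\circ$ is dense and open in $Z$ (because $\phi$ surjects onto $X(w)$ and $Z$ is irreducible), so that $\pi^{-1}(Z\setminus Z^\circ)$ is a proper closed subset of $Z(\beta)$ whose image under $\pi$ is not dense. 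Any irreducible component of $\tilde\phi^{-1}X(ws_\beta)$ contained there therefore fails to dominate $Z$, and in particular is not birational to $Z$; this reduces the whole statement to an analysis over $Z^\circ$.

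The key step is to compute $\tilde\phi^{-1}X(ws_\beta)\cap(Z^\circ\times E)$ exactly. Using diagram~\eqref{e.mu}, the map $\mu^\circ$ is identified via $m$ with the projection $q$, and under $\bar\mu^\circ$ the divisor $X(ws_\beta)^\circ\subseteq X(w)^\circ\cup X(ws_\beta)^\circ$ corresponds to $X(ws_\beta)^\circ\times\{ws_\beta\}$. Pulling this back by $q$ yields $X(ws_\beta)^\circ\times U_{w(-\beta)}\times\{ws_\beta\}$, and to transport it through $m$ I would invoke the fact that $U_{w(-\beta)}$ fixes the point $ws_\beta\in E$ (indeed $\dot w^{-1}U_{w(-\beta)}\dot w=U_{-\beta}$ fixes $\dot s_\beta B$, so the twist $(u'')^{-1}$ appearing in the formula for $m$ acts trivially on this point). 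This gives $(\mu^\circ)^{-1}X(ws_\beta)^\circ=X(w)^\circ\times\{ws_\beta\}$. Since the first map $Z^\circ\times E\to X(w)^\circ\times E$ is the projection coming from $Z^\circ\isom X(w)^\circ\times\phi^{-1}(w)$, I conclude
\[
 \tilde\phi^{-1}X(ws_\beta)\cap(Z^\circ\times E)=Z^\circ\times\{ws_\beta\}=\Sigma^\circ,
\]
a section of $\pi$. Hence $\Sigma$ is a component of $\tilde\phi^{-1}X(ws_\beta)$ and $\pi|_\Sigma$ is an isomorphism over the dense open $Z^\circ$, so it is birational. If $D\neq\Sigma$ is another component, its intersection with $Z^\circ\times E$ is open in $D$ and contained in $\Sigma^\circ\subseteq\Sigma$; were it nonempty it would force $D\subseteq\Sigma$, a contradiction, so $D$ lies in $\pi^{-1}(Z\setminus Z^\circ)$ and is not birational to $Z$. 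This yields both existence and uniqueness.

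Finally, for generic reducedness I would exploit that the local model is smooth: via $\bar\mu^\circ$ the open set $X(w)^\circ\cup X(ws_\beta)^\circ$ is isomorphic to $X(ws_\beta)^\circ\times E\isom\A^{\ell(w)-1}\times\P^1$, inside which $X(ws_\beta)^\circ$ is the reduced smooth Cartier divisor $\A^{\ell(w)-1}\times\{ws_\beta\}$. The restriction of $\tilde\phi$ to $Z^\circ\times E$ factors through the smooth morphisms $\mu^\circ$ (a trivial $\A^1$-bundle) and the projection $Z^\circ\times E\to X(w)^\circ\times E$ (smooth because $\phi^{-1}(w)$ is nonsingular by Lemma~\ref{l.smooth}), and the pullback of a reduced Cartier divisor along a smooth morphism is reduced. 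Thus the scheme $\tilde\phi^{-1}X(ws_\beta)$ is reduced on the dense open $\Sigma^\circ\subseteq\Sigma$, i.e.\ generically reduced along $\Sigma$. The main obstacle is the key computation of the preimage over $Z^\circ$: one must verify that it is the honest section $\Sigma^\circ$ rather than a divisor whose fibers over $Z^\circ$ move inside $E$, and it is exactly the fixed-point property of $U_{w(-\beta)}$ on $E$ that rules this out.
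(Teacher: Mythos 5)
Your proof is correct, and for the uniqueness half it takes a genuinely different route from the paper's. The paper argues $B$-equivariantly inside $X(w,\beta)$: it observes that $\Delta'=\overline{B\cdot[w,e]}$ and $\Sigma'=\overline{B\cdot[w,s_\beta]}$ are the only irreducible $B$-invariant divisors of $X(w,\beta)$ mapped birationally to $X(w)$ by $\bar\pi$, shows that any component $\Theta$ of $\tilde\phi^{-1}X(ws_\beta)$ birational to $Z$ must have $\psi(\Theta)=\Delta'$ or $\psi(\Theta)=\Sigma'$, rules out $\Delta'$ because $\mu(\Delta')=X(w)$, and then identifies $\Theta=\overline{\psi^{-1}V}=\Sigma$. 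You instead prove the sharper set-theoretic statement $\tilde\phi^{-1}X(ws_\beta)\cap(Z^\circ\times E)=Z^\circ\times\{ws_\beta\}=\Sigma^\circ$, from which both existence and uniqueness are immediate: any other component misses the open set $Z^\circ\times E$, hence lies over $Z\setminus Z^\circ$ and cannot even dominate $Z$. This avoids all orbit considerations (you never need the components to be $B$-invariant, only that $Z^\circ$ is dense), and it yields slightly more than the lemma asserts. The price is that the entire weight of the argument rests on the computation $(\mu^\circ)^{-1}X(ws_\beta)^\circ=X(w)^\circ\times\{ws_\beta\}$ via diagram \eqref{e.mu} together with the fact that $U_{w(-\beta)}$ fixes $ws_\beta\in E$; this is exactly the step the paper compresses into ``one checks that it is $V$,'' and your fixed-point verification ($\dot s_\beta^{-1}U_{-\beta}\dot s_\beta=U_\beta\subseteq B$) supplies that check correctly. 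The generic-reducedness argument --- pulling back the reduced divisor $X(ws_\beta)^\circ$ along the smooth maps $\mu^\circ$ (a trivial $\A^1$-bundle) and $Z^\circ\times E\to X(w)^\circ\times E$ (smooth since $\phi^{-1}(w)$ is nonsingular by Lemma~\ref{l.smooth}) --- coincides with the paper's, stated a bit more carefully.
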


\begin{proof}
Suppose $\Theta$ is an irreducible component of $\tilde\phi^{-1}X(ws_\beta)$ mapping birationally to $Z$.  In the notation of the basic diagram \eqref{e.basic1}, there are exactly two irreducible $B$-invariant divisors in $X(w,\beta)$ which are mapped birationally to $X(w)$ by $\bar\pi$, namely the standard divisor $\Delta' = \overline{B\cdot [w,e]}$ and the orbit closure $\Sigma' = \bar{B\cdot [w,s_\beta]}$.  (For any point $y$ other than $e$ or $s_\beta$, the orbit $B\cdot[w,y]$ is dense in $X(w,\beta)$.)  Since $\Theta$ is irreducible, we have $\Theta \subseteq \psi^{-1}\Delta'$ or $\Theta\subseteq\psi^{-1}\Sigma'$.  Since $\bar\pi$ restricted to either $\Delta'$ or $\Sigma'$ is birational, we must have $\psi(\Theta) = \Delta'$ or $\psi(\Theta) = \Sigma'$, respectively.  The former case cannot hold for $\Theta \subseteq \tilde\phi^{-1}X(ws_\beta)$, since $\mu(\Delta')=X(w)$.  Therefore $\Theta \subseteq \psi^{-1}(\Sigma')$.

Now restrict to the basic diagram \eqref{e.basic2}, and consider the dense open set $V = B\cdot [w,s_\beta] =X(w)^\circ \times \{ws_\beta\} \subseteq \Sigma'$.  Since $\bar\pi$ is $B$-equivariant, its restriction to $V$ is an isomorphism onto its image in $X(w)$, which is the cell $X(w)^\circ$.  From the definition, we have $\psi^{-1}V \subseteq \Sigma$.  On the other hand, an open set of $\Theta$ must map onto $V$, so it follows that  $\Theta = \bar{\psi^{-1}V} = \Sigma$.

To see that $\tilde\phi^{-1}X(ws_\beta)$ is reduced along $\Sigma$, again restrict to the basic diagram \eqref{e.basic2}.  Recall that  $\mu^\circ \colon X(w)^\circ \times E \to X(w)^\circ \cup X(ws_\beta)^\circ$ is given by $\mu^\circ( uwB, wpB ) = uwpB$, and by the diagram \eqref{e.mu}, it is identified with the projection of a trivial $\A^1$-bundle.  It follows that $(\mu^\circ)^{-1}X(ws_\beta)^\circ$ is reduced, and one checks that it is $V = X(w)^\circ \times \{ws_\beta\}$ of the previous paragraph.  
Since $\psi^{-1}V = Z^{\circ} \times \{ws_\beta\}$ is a dense open set in $\Sigma$, we are done.\qed
\end{proof}

Proposition~\ref{p.degenerate} shows that $\Eff(Z(\beta))$ is generated by $\pi^*\Eff(Z)$, $\Delta$, and $\Sigma$ (when the latter exists).  To complete the proof of Theorem~\ref{t.mainZ}, we must show $\Sigma$ is extremal when it exists.  We need another simple lemma.

\begin{lemma}\label{l.normal}
Let $X$ be a normal variety, and let $Y\subseteq X$ be a prime (Weil) divisor, i.e., an irreducible subvariety of codimension one.  Let $f\colon X' \to X$ be a proper surjective morphism whose generic fiber is connected and reduced, where $X'$ is a nonsingular variety.  Write $Y' = f^{-1}Y \subseteq X$ for the (scheme-theoretic) inverse image, a divisor in $X'$.  
Then $f_*\OO_{X'}(Y') \isom \OO_X(Y)$ as sheaves of $\OO_X$-modules.  In particular, this sheaf is torsion-free.
\end{lemma}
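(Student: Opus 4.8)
The plan is to identify $f_*\O_{X'}(Y')$ with $\O_X(Y)$ directly as subsheaves of the constant field sheaf $K(X)$, using the single geometric input $f_*\O_{X'}=\O_X$ together with the $\mathcal{H}om$-description of $\O_X(Y)$ furnished by normality. This route avoids any identification of function fields, so it applies uniformly to birational contractions and to bundle projections such as $\pi$; the only place where the degree of $f$ enters is the rank-one reduction below, which is exactly the feature excluding higher-degree covers.

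First I would dispose of the torsion-free assertion and fix the ambient identifications. Since $X'$ is integral and $f$ is dominant, a local section of $f_*\O_{X'}(Y')$ annihilated by a nonzero $g\in\O_X$ is annihilated by $f^*g\neq 0$, hence vanishes because $\O_{X'}(Y')$ is invertible on the integral scheme $X'$; this gives torsion-freeness at once. The essential input is $f_*\O_{X'}=\O_X$, which under the hypotheses holds because $f$ is a proper surjective contraction (connected fibers) onto the normal variety $X$, by Stein factorization and Zariski's main theorem. Since $Y'=f^{-1}Y$ is disjoint from the generic fiber, $\O_{X'}(Y')$ agrees with $\O_{X'}$ over the generic point of $X$, so $f_*\O_{X'}(Y')$ has generic rank one and is canonically a subsheaf of $K(X)$: every local section is a pullback $f^*h$ for a unique $h\in K(X)$. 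Finally, normality gives $\O_X(-Y)=\mathcal{I}_Y$ and $\O_X(Y)=\mathcal{H}om_{\O_X}(\mathcal{I}_Y,\O_X)=\{h\in K(X):h\,\mathcal{I}_Y\subseteq\O_X\}$, while $\mathcal{I}_{Y'}=\O_{X'}(-Y')$ is the image of $f^*\mathcal{I}_Y$ in $\O_{X'}$.

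With these in hand I would establish the two inclusions inside $K(X)$. For $f_*\O_{X'}(Y')\subseteq\O_X(Y)$: given $a\in\mathcal{I}_Y$ and a section $f^*h$, the product $f^*(ah)=f^*a\cdot f^*h$ lies in $\O_{X'}(-Y')\cdot\O_{X'}(Y')=\O_{X'}$, so $ah\in f_*\O_{X'}=\O_X$; thus $h\,\mathcal{I}_Y\subseteq\O_X$, i.e.\ $h\in\O_X(Y)$. For the reverse inclusion, given $h$ with $h\,\mathcal{I}_Y\subseteq\O_X$, I would verify $\ord_E(f^*h)+\ord_E(Y')\geq 0$ for each prime divisor $E\subseteq X'$: if $f(E)\not\subseteq Y$ then $E\not\subseteq Y'$ and $h$ is regular at the generic point of $f(E)$, so both terms are nonnegative; if $f(E)\subseteq Y$, I would pick $a\in\mathcal{I}_Y$ realizing $\ord_E(f^*a)=\ord_E(Y')$ (possible because $\mathcal{I}_{Y'}$ is generated by the $f^*a$ and $X'$ is smooth, so $Y'$ is Cartier), whence $\ord_E(f^*h)+\ord_E(Y')=\ord_E\!\bigl(f^*(ah)\bigr)\geq 0$ since $ah\in\O_X$. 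The two inclusions yield the isomorphism $f_*\O_{X'}(Y')\isom\O_X(Y)$.

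The main obstacle is precisely the valuative check along divisors $E\subseteq X'$ lying over the locus where $Y$ fails to be Cartier — the contracted divisors mapping into $Y$ or into higher-codimension strata of $X$. Treating all such $E$ uniformly is what the identity $\ord_E(Y')=\min_{a\in\mathcal{I}_Y}\ord_E(f^*a)$ accomplishes, and it is this step that replaces the naive comparison of rational functions available only in the birational case. The other indispensable ingredient is the rank-one reduction resting on $f_*\O_{X'}=\O_X$, which is what makes $f_*\O_{X'}(Y')$ a subsheaf of $K(X)$ at all and is the hypothesis genuinely in force for the contractions appearing in this paper.
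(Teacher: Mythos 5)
Your proof is correct, but it takes a genuinely different route from the paper's. The paper restricts to the smooth locus $\iota\colon X_{\mathrm{sm}}\hookrightarrow X$, where $Y$ is Cartier, so that $\O_{X'}(Y')$ pulls back from $\O_{X_{\mathrm{sm}}}(Y_{\mathrm{sm}})$; it then applies the projection formula together with $f_*\O_{X'}=\O_X$, and finally recovers $\O_X(Y)$ as $\iota_*\O_{X_{\mathrm{sm}}}(Y_{\mathrm{sm}})$, using normality to push forward across the codimension-two singular locus. You instead realize both sheaves as subsheaves of the constant sheaf $K(X)$ and check two inclusions, the crux being the valuative identity $\ord_E(Y')=\min_{a\in\mathcal{I}_Y}\ord_E(f^*a)$ along prime divisors $E\subseteq X'$ mapping into $Y$. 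This buys something concrete: the paper's first equality, $f_*\O_{X'}(Y') = f_*\iota'_*(\iota')^*\O_{X'}(Y')$, silently asserts that sections of $\O_{X'}(Y')$ extend across $f^{-1}(X\setminus X_{\mathrm{sm}})$, which is not formal, since that preimage can contain divisors of $X'$ (and does, for typical Bott--Samelson contractions); your case-(b) valuation computation is exactly the argument that justifies such extension, so your proof is, if anything, more airtight at that point. One caveat: your claim that $f_*\O_{X'}=\O_X$ ``holds under the hypotheses'' via Stein factorization is not a consequence of properness and surjectivity alone --- a connected \'etale double cover of a smooth variety is a counterexample, and indeed the lemma as literally stated fails there, since then $f_*\O_{X'}(Y')\isom \O_X(Y)\otimes f_*\O_{X'}$ has rank two. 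Connectedness of the generic fiber is an implicit standing hypothesis: it holds for the morphisms $\tilde\phi$ and $\phi$ to which the lemma is applied (their generic fibers are irreducible, by the product structure in the proof of Lemma~\ref{l.smooth}), and the paper's own proof uses it just as silently in the step $f'_*f'^*\O_{X_{\mathrm{sm}}}(Y_{\mathrm{sm}})=\O_{X_{\mathrm{sm}}}(Y_{\mathrm{sm}})$. So this is a defect shared by both arguments (really of the statement itself) rather than a gap peculiar to yours; you should simply record generic connectedness as an assumption instead of deriving it.
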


\begin{proof}
If $X$ is nonsingular, this is simply the projection formula, since $\OO_{X'}(Y') \isom f^*\OO_X(Y)$ and $f_*\OO_{X'} \isom \OO_X$.  To reduce to this case, write $\iota\colon X_\mathrm{sm}\hookrightarrow X$ for the inclusion of the smooth locus, define notation by the cartesian diagram
\begin{diagram}
 X'_\mathrm{sm} & \rInto^{\iota'} & X' \\
   \dTo^{f'}  &     & \dTo_f \\
   X_\mathrm{sm} & \rInto^\iota  &  X,
\end{diagram}
and set $Y_\mathrm{sm}=Y\cap X_\mathrm{sm}$.  Then
\begin{align*}
  f_*\OO_{X'}(Y') &\isom f_*\iota'_*(\iota')^*\OO_{X'}(Y') \\
                 &\isom \iota_*f'_*\OO_{X'_\mathrm{sm}}(Y'_\mathrm{sm}) \\
                 &\isom \iota_*\OO_{X_\mathrm{sm}}(Y_\mathrm{sm}) \\
                 &\isom \OO_X(Y),
\end{align*}
as claimed.\qed
\end{proof}

\begin{proposition}\label{p.sigma-extremal}
Suppose $ws_\beta <w$.  Then the divisor $\Sigma$ generates an extremal ray of $\Eff(Z(\beta))$.
\end{proposition}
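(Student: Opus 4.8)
The plan is to produce a single curve covering $\Sigma$ that has negative intersection with $\Sigma$ but nonnegative intersection with every other generator of the cone, and then deduce extremality by an elementary convexity argument. Recall from Proposition~\ref{p.degenerate} that $\Eff(Z(\beta)) = \R_{\geq 0}[\Sigma] + K$, where $K = \pi^*\Eff(Z) + \R_{\geq 0}[\Delta]$. I claim it suffices to find an irreducible curve class $[C]$ with (a) $\Sigma\cdot C < 0$, (b) $\Delta\cdot C \geq 0$, and (c) $\pi^*e\cdot C \geq 0$ for all $e\in\Eff(Z)$; conditions (b)--(c) say exactly that $C\cdot(-)\geq 0$ on $K$. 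Granting such a $C$, suppose $[\Sigma] = v_1 + v_2$ with $v_i\in\Eff(Z(\beta))$ and write $v_i = b_i[\Sigma] + w_i$ with $b_i\geq 0$, $w_i\in K$. Then $(1-b_1-b_2)[\Sigma] = w_1 + w_2\in K$, and pairing with $C$ (using $\Sigma\cdot C<0$) forces $b_1+b_2\leq 1$; if $b_1+b_2<1$ then $[\Sigma]\in K$, whence $\Sigma\cdot C\geq 0$, a contradiction. Thus $b_1+b_2=1$ and $w_1+w_2=0$, and since $Z(\beta)$ is projective the cone $\Eff(Z(\beta))$ is salient, so $w_1=w_2=0$ and $v_i=b_i[\Sigma]\in\R_{\geq 0}[\Sigma]$. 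This is precisely the assertion that $\Sigma$ spans an extremal ray.

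To construct $C$, I use the contraction $\tilde\phi=\mu\circ\psi$. Since $ws_\beta<w$ we have $w\star s_\beta=w$, so $\tilde\phi\colon Z(\beta)\to X(w)$, and by the discussion preceding Lemma~\ref{l.component} it carries $\Sigma$ into the proper closed subvariety $X(ws_\beta)\subset X(w)$; hence $\tilde\phi|_\Sigma$ has positive-dimensional fibers. Concretely, over $Z^\circ$ one has $\Sigma^\circ = Z^\circ\times\{ws_\beta\}$, and under $U^w\isom U^{ws_\beta}\times U_{w(-\beta)}$ the restriction of $\tilde\phi$ to $\Sigma^\circ$ is constant along the orbits of $U_{w(-\beta)}\isom\G_a$. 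I take $C$ to be the closure in $\Sigma$ of a general such orbit. Then $\tilde\phi$ is constant on $C$, so $\tilde\phi_*[C]=0$; since $\pi|_\Sigma$ is birational by Lemma~\ref{l.component}, a general $C$ maps birationally onto an irreducible curve $\pi_*C\subset Z$ meeting $Z^\circ$, and as the orbit and its base point vary these image curves cover $Z$. Thus $\pi_*C$ is a movable class, which gives (c); and as $\Sigma$ and $\Delta$ are disjoint over $Z^\circ$, a general $C$ is not contained in $\Delta$, which gives (b).

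The crux is (a), the strict inequality $\Sigma\cdot C<0$, and I expect the verification of strictness to be the main obstacle. By Lemma~\ref{l.component}, $\Sigma$ is the unique component of $\tilde\phi^{-1}X(ws_\beta)$ dominating $Z$ and the scheme is generically reduced along it; any other component is contained in $\pi^{-1}(Z\setminus Z^\circ)$ and, by a dimension count, must equal the pullback $\pi^{-1}(D_i)=\pi^* D_i$ of a $B$-invariant prime divisor $D_i\subset Z$. Hence $\tilde\phi^{-1}X(ws_\beta)=\Sigma+\pi^*F$ for an effective divisor $F$ supported on $Z\setminus Z^\circ$. Now $C$ is contracted by $\tilde\phi$ to a general point $q$ of $X(ws_\beta)$; since the Schubert variety $X(w)$ is normal it is smooth in codimension one, so $X(ws_\beta)$ is Cartier near $q$, and therefore $\tilde\phi^{-1}X(ws_\beta)$ is, near $C$, the pullback of a line bundle under the constant map $\tilde\phi|_C$ and restricts trivially to $C$. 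This yields $\Sigma\cdot C = -\pi^*F\cdot C = -\,F\cdot\pi_*C$, which is $\leq 0$ automatically because $\pi_*C$ meets $Z^\circ$. It remains to prove this is strictly negative, i.e.\ that $F\neq 0$ and that the boundary point(s) of the orbit closure $\pi_*C$ land on the support of $F$ with positive multiplicity; this requires exhibiting a genuine vertical component of $\tilde\phi^{-1}X(ws_\beta)$ and tracking the limit of the $U_{w(-\beta)}$-orbit as it leaves $Z^\circ$. Lemma~\ref{l.normal}, applied with $f=\tilde\phi$, $X=X(w)$, and $Y=X(ws_\beta)$, controls $\tilde\phi_*\O_{Z(\beta)}(\tilde\phi^{-1}X(ws_\beta))$ and should pin down the structure of $F$; combined with the explicit orbit computation over the boundary, this gives $F\cdot\pi_*C>0$ and completes the proof.
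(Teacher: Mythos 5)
Your reduction is sound as pure convexity: \emph{if} a curve class $C$ with properties (a)--(c) exists, then $\Sigma$ is extremal (modulo a sign slip---pairing $(1-b_1-b_2)[\Sigma]=w_1+w_2$ with $C$ forces $b_1+b_2\geq 1$, not $\leq 1$, and the case $b_1+b_2>1$ must also be ruled out by salience, just as you rule out $w_1+w_2\neq 0$). Conditions (b) and (c) do hold for your orbit-closure curve, and the identity $\Sigma\cdot C=-F\cdot\pi_*C$ is correctly derived. But the crux (a), which you yourself flag as unfinished, is not a missing detail: it is false in general, so the strategy cannot be completed. Take the smallest instance of the proposition, $Z=X(\alpha)\isom\P^1$ with $\beta=\alpha$, so $w=s_\alpha$, $ws_\beta=e<w$, and $Z(\beta)=X(\alpha,\alpha)\isom\P^1\times\P^1$ with $\tilde\phi=\phi_2$ the second projection. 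Here $\tilde\phi^{-1}X(ws_\beta)=\phi_2^{-1}(eB)=\P^1\times\{0\}=\Sigma$ exactly; there is no vertical component, i.e.\ $F=0$, and your curve $C$ (the closure of a $U_{w(-\beta)}$-orbit, which in this case is all of $\Sigma$) satisfies $\Sigma\cdot C=\Sigma^2=0$. In fact $\Sigma$, being a fiber of $\tilde\phi$, is nef, so \emph{no} curve in $Z(\beta)$ meets it negatively, even though $\Sigma$ is extremal in $\Eff(\P^1\times\P^1)$, which is spanned by $X_1$ and $\Sigma$. Extremality of $\Sigma$ is in general of this ``nef boundary'' type, and a nef extremal class can never be certified by a negatively intersecting curve.

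This is exactly why the paper proceeds differently. Since $\Sigma$ and $\Delta$ both meet a fiber of $\pi$ once, $\Sigma=\Delta+\pi^*D$ for a unique class $D$ on $Z$, and, given Proposition~\ref{p.degenerate} and intersection with a fiber, extremality of $\Sigma$ is equivalent to $D$ not lying in $\Eff(Z)$---a non-effectivity statement on $Z$, not a curve statement on $Z(\beta)$. The paper proves it sheaf-theoretically: by Lemma~\ref{l.component} it suffices to show $D'=\tilde\phi^{-1}X(ws_\beta)-\Delta$ has no sections; by Lemma~\ref{l.normal} both $\tilde\phi_*\O(\tilde\phi^{-1}X(ws_\beta))$ and $\tilde\phi_*\bigl(\O(\tilde\phi^{-1}X(ws_\beta))|_\Delta\bigr)$ are identified with the divisorial sheaf $\O_{X(w)}(X(ws_\beta))$, and the natural map between them is nonzero, hence injective, giving $H^0(\O(D'))=0$. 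That argument is insensitive to whether $F=0$ (in the $\P^1\times\P^1$ example it simply says $D=\O_{\P^1}(-1)$ has no sections). Any repair of your approach would have to show non-pseudoeffectivity of the class $\Sigma-\Delta\in\pi^*N^1(Z)$ by some such global argument, rather than by exhibiting a single curve pairing negatively with $\Sigma$.
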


\begin{proof}
We know that $\Eff(Z(\beta))$ is generated by $\pi^*\Eff(Z)$, $\Delta$, and $\Sigma$.  Since $\Sigma$ maps birationally to $Z$, we also have $\Sigma = \Delta + \pi^*D$ for some (unique) divisor $D$ on $Z$.  To prove the proposition, we will show $D$ is not effective.

By Lemma~\ref{l.component}, we have $\tilde\phi^{-1}X(ws_\beta) = \Sigma + \pi^*E$ for some effective divisor $E\subseteq Z$, so it suffices to show that the divisor
\[
  D' = \tilde\phi^{-1}X(ws_\beta) - \Delta
\]
is not effective.  Let $\cL = \OO_{Z(\beta)}(D')$, and consider the sequence
\[
  0 \to \cL \to \OO(\tilde\phi^{-1}X(ws_\beta)) \to \OO(\tilde\phi^{-1}X(ws_\beta))|_\Delta \to 0
\]
of sheaves on $Z(\beta)$.  We claim that the induced homomorphism
\begin{equation}\label{e.hom}
  \tilde\phi_*\OO(\tilde\phi^{-1}X(ws_\beta)) \to \tilde\phi_*(\OO(\tilde\phi^{-1}X(ws_\beta))|_\Delta)
\end{equation}
is an injection of sheaves on $X(w)$.  
It follows that $H^0(\cL)=0$, so this suffices to prove the proposition.

In fact, both sheaves in \eqref{e.hom} are isomorphic to the divisorial sheaf $\OO_{X(w)}(X(ws_\beta))$.  To see this, observe that $\tilde\phi_*(\OO(\tilde\phi^{-1}X(ws_\beta))|_\Delta) \isom \phi_*\OO_Z(\phi^{-1}X(ws_\beta))$ and apply Lemma~\ref{l.normal} to the morphisms $\tilde\phi\colon Z(\beta) \to X(w)$ and $\phi\colon Z \to X(w)$.  The morphism \eqref{e.hom} is nonzero---it is an isomorphism over the open set $X(w)^\circ$---and a nonzero homomorphism of divisorial sheaves is injective (see, e.g., \cite[Lemma~3.3]{kovacs}), so the claim is proved.\qed
\end{proof}

Theorem~\ref{t.mainZ} shows that the $\PP^1$-bundles $Z(\beta)\to Z$ are special, in the sense that the effective cone of the total space is finitely generated over that of the base.  In general one cannot hope for such simple behavior, even for $T$-equivariant bundles on nice varieties.

\begin{example}\label{ex.infinite}
Suppose $X \to Y$ is a $T$-equivariant $\PP^1$-bundle, with $Y$ a complexity-one $T$-variety.  Even if $\Eff(Y)$ is finitely generated, the cone $\Eff(X)$ need not be.  Indeed, let $Z$ be a $2$-dimensional smooth toric variety corresponding to a fan with $n\geq 11$ rays, having a cone $\sigma$ spanned by rays $\rho_1,\rho_2$ such that $\rho_3,\ldots,\rho_n$ are contained in $-\sigma$.  Assume the ground field is uncountable, and let $E$ be a rank $3$ toric vector bundle on $Z$, chosen as in \cite[Theorem~1.4]{ghps} so that $\PP(E)$ has non-polyhedral effective cone.  Furthermore assume there is an exact sequence
\[
  0 \to L \to E \to F \to 0
\]
of toric vector bundles on $Z$, with $L$ a line bundle and $F$ of rank $2$.  
(To see this can be arranged, start with any $E$ as in \cite{ghps} so that $\Eff(\PP(E))$ is not finitely generated, and choose a $T$-invariant rational section $\sigma\colon Z \dashrightarrow\PP(E)$.  If $\sigma$ is regular, then it corresponds to such an $L$.  Otherwise, blow up finitely many $T$-invariant points to resolve indeterminacy of $\sigma$, obtaining a new toric variety $Z'\to Z$; now replace $Z$ with $Z'$, and $E$ with its pullback to $Z'$.)

The line bundle defines a section $Z \to \PP(E)$; let $S \subset \PP(E)$ be the image, let $X=\Bl_{S}(\PP(E))$, and let $Y=\PP(F)$.  The effective cone of $X$ surjects onto that of $\PP(E)$, so $\Eff(X)$ is not finitely generated.  On the other hand, the map of vector bundles $E \to F$ on $Z$ defines a $\PP^1$-bundle $X \to Y$, and $Y$ being the projectivization of a rank-two bundle on $Z$, $\Eff(Y)$ is finitely generated \cite{knop,hs,gonzalez}.  I thank J.~Tevelev for suggesting this construction.
\end{example}

\section{Bott-Samelson varieties}\label{s.bs}

We briefly review basic facts about Bott-Samelson varieties.  For more details, the articles by Demazure \cite{demazure}, Lauritzen and Thomsen \cite{lt}, Magyar \cite{magyar}, and Willems \cite{willems} are recommended.

Recall that for each simple root $\alpha$, there is a minimal parabolic subgroup $P_\alpha\subseteq G$.  
For a group $\Gamma$ acting on $Y$ on the right and $Z$ on the left, the {\it balanced quotient} is defined as $Y \times^\Gamma Z = (Y\times Z)/(y\cdot g,z)\sim (y,g\cdot z)$.

Given a sequence $\ul\alpha = (\alpha_1,\ldots,\alpha_d)$ of simple roots, the corresponding \textit{Bott-Samelson variety} is
\[
  X(\ul\alpha) = P_{\alpha_1} \times^B P_{\alpha_2}\times^B \cdots \times^B P_{\alpha_d}/B,
\]
where $B$ acts on the parabolic subgroups by multiplication.  
Equivalently, $X(\ul\alpha)$ is the quotient of $P_{\alpha_1} \times \cdots \times P_{\alpha_d}$ by $B^d$, via the action $(p_1,p_2,\ldots,p_d)\cdot(b_1,\ldots,b_d) = (p_1 b_1, b_1^{-1} p_2 b_2,\ldots,b_{d-1}^{-1}p_d b_d)$.

Bott-Samelson varieties come with $B$-equivariant morphisms
\begin{align*}
  \phi_i\colon  X(\ul\alpha) & \to G/B \\
           [p_1,\ldots,p_d] & \mapsto p_1\cdots p_i B
\end{align*}
and
\begin{align*}
  \pi_i \colon X(\ul\alpha) & \to X(\alpha_1,\ldots,\alpha_i) \\
          [p_1,\ldots,p_d] &\mapsto [p_1,\ldots,p_i],
\end{align*}
both for $0\leq i\leq d$.  The product of the $\phi_i$'s defines an isomorphism
\begin{align}
  X(\ul\alpha) & \xrightarrow{\sim} \{eB\}\times_{G/P_{\alpha_1}} G/B \times_{G/P_{\alpha_2}} \cdots \times_{G/P_{\alpha_d}} G/B \label{e.config} \\
  [p_1,\ldots,p_d] & \mapsto (eB,\, p_1B, \,\ldots\,,\, p_1\cdots p_d B) \nonumber,
\end{align}
embedding $X(\ul\alpha)$ in the product $(G/B)^{d+1}$.  With respect to this isomorphism, $\phi_i$ is identified with the projection on the $i$th factor, and $\pi_i$ is identified with projection on the first $i$ factors.

To economize on subscripts, let $\phi = \phi_d$ and $\pi=\pi_{d-1}$.  The projection $\pi$ makes $X=X(\ul\alpha)$ a $\PP^1$-bundle over the smaller Bott-Samelson variety $X(\alpha_1,\ldots,\alpha_{d-1})$, so $X$ is a nonsingular projective variety of dimension $d$, and we have $\Pic(X) \isom NS(X) \isom \Z^d$.

For $1\leq i\leq d$, there is a {\it standard divisor} $X_i \subseteq X$, defined by requiring $p_i=e$.  Evidently we have $X_i \isom X(\alpha_1,\ldots,\hat\alpha_i,\ldots,\alpha_d)$.  The union of the $X_i$ form a normal crossings divisor, and the classes of the $X_i$ form a basis for $NS(X)$.

\begin{proof}[Proof of Theorem~\ref{t.main}]
Using the isomorphism \eqref{e.config}, an equivalent way to define $X(\ul\alpha)$ is by the following recursive procedure: start with $X(\emptyset) = \mathrm{pt}$, with its $B$-equivariant map $\phi$ embedding it as $eB\in G/B$.  In general, we take $Z=X(\alpha_1,\ldots,\alpha_{d-1}) \xrightarrow{\phi} G/B$ as in \S\ref{s.bdiv}; then in notation of that section we have $X(\alpha_1,\ldots,\alpha_{d-1},\alpha_d) = Z(\alpha_d)$.  The divisor $X_d$ is identified with the section $\Delta$, and $\Sigma_d$ is $\Sigma$ (if it exists).  The theorem now follows from Theorem~\ref{t.mainZ} and Lemma~\ref{l.component}.\qed
\end{proof}

\begin{example}
The simplest non-reduced word is $(\alpha,\alpha)$ (for any simple root $\alpha$, in any root system).  As remarked in \cite{lt}, one has $X(\alpha,\alpha) \isom \PP^1\times\PP^1$, and the effective cone is not generated by $X_1$ and $X_2$.  Writing the $B$-fixed point as $0=eB \in \PP^1\isom SL_2/B$, the divisor $X_1$ is $\{0\} \times\PP^1$, and $X_2$ is the diagonal.  The map $\phi$ is projection onto the second factor, so the divisor $\Sigma_2$ is $\phi^{-1}(0) = \PP^1 \times\{0\}$.  The theorem claims that these three divisors generate $\Eff(X(\alpha,\alpha))$, and in fact $X_1$ and $\Sigma_2$ suffice.
\end{example}

For more interesting examples, we will need some facts about line bundles.  Recall that any character $\lambda$ determines a line bundle on $G/B$ by $\cL_\lambda = G \times^B L_\lambda$, where $L_\lambda$ is the one-dimensional representation of $B$ with character $\lambda$.  A character induces line bundles on a Bott-Samelson variety $X=X(\ul\alpha)$ by $\OO_i(\lambda) := \phi_i^*\cL_\lambda$.  Equivalently,
\[
  \OO_i(\lambda) = (P_{\alpha_1} \times \cdots \times P_{\alpha_d})\times^{B^d} L_{\lambda,i},
\]
where for $z\in L_{\lambda,i}$, the action is by $(b_1,\ldots,b_d)\cdot z = \lambda(b_i)z$.

For any simple root $\beta$, the isomorphism $P_\beta/B\isom\PP^1$ identifies the line bundle $\cL_{s_\beta\varpi_\beta}|_{P_\beta/B}$ with $\OO_{\PP^1}(1)$.  Equipping $V_\beta = H^0(P_\beta/B,\cL_{s_\beta\varpi_\beta}|_{P_\beta/B})$ with its natural $B$-action yields exact sequences of $B$-modules,
\[
  0 \to L_{\varpi_\beta} \to V_\beta \to L_{s_\beta\varpi_\beta} \to 0,
\]
and of vector bundles on $G/B$
\[
  0 \to \cL_{\varpi_\beta} \to \mathcal{V}_\beta \to \cL_{s_\beta\varpi_\beta} \to 0,
\]
where $\mathcal{V}_\beta = G \times^B V_\beta$.  Pulling back by $\phi_d$, this induces a sequence on $X$,
\[
  0 \to \OO_d(\varpi_\beta) \to \phi^*\mathcal{V}_\beta \to \OO_d(s_\beta\varpi_\beta) \to 0.
\]
The Bott-Samelson variety $X(\ul\alpha,\beta)$ is then identified with the projective bundle $\PP(\phi_d^*\mathcal{V}_\beta)$.  The corresponding universal quotient bundle is obtained by dualizing the inclusion $\OO_{d+1}(\varpi_\beta) \to \phi_{d+1}^*\mathcal{V}_\beta$, so we define $\OO_{d+1}(1) = \OO_{d+1}(-\varpi_\beta)$, and write $\OO_{d+1}(m)$ for its tensor powers.

The line bundles $\OO_i(1)$ form a basis for $\Pic X(\ul\alpha)$, for $1\leq i\leq d$, and a result of Lauritzen-Thomsen says that $\OO(m_1,\ldots,m_d) :=\OO_1(m_1)\otimes \cdots \otimes \OO_d(m_d)$ is very ample (resp., globally generated) iff all $m_i>0$ (resp., all $m_i\geq0$) \cite{lt}.\footnote{The conventions of \cite{demazure} and \cite{lt} differ from ours: their simple roots are positive for an opposite Borel subgroup.}  It is useful to know a change-of-basis formula for passing to the $X_i$ basis of $\Pic X(\ul\alpha)$.

\begin{lemma}[{cf.~\cite[\S4.2, Proposition 1]{demazure}}] \label{l.expand}
For a character $\lambda$, we have
\[
  \OO_i(\lambda) \isom \OO\left(\sum_{j=1}^i r_{ij}(\lambda) X_j\right)
\]
as line bundles on $X(\ul\alpha)$, where the coefficients are 
\[
  r_{ij}(\lambda) = \langle \lambda, \,s_is_{i-1}\cdots s_j\alpha_j^\vee \rangle = \langle -\lambda, \, s_is_{i-1}\cdots s_{j+1}\alpha_j^\vee \rangle.
\]
\end{lemma}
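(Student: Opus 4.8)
The plan is to reduce to the top index $i=d$ and then induct on $d$ using the $\P^1$-bundle $\pi = \pi_{d-1}\colon X \to Z$, where $Z = X(\alpha_1,\ldots,\alpha_{d-1})$ and $\beta = \alpha_d$. For the reduction, write $Y = X(\alpha_1,\ldots,\alpha_i)$, so that $\phi_i = \phi_i^{Y}\circ\pi_i$; hence $\O_i(\lambda)$ is pulled back from $Y$ along $\pi_i$, and likewise $\pi_i^*X_j^Y = X_j$ for $j\le i$, while $r_{ij}(\lambda)$ involves only $\alpha_j,\ldots,\alpha_i$. Thus the general formula follows from the $i=d$ case applied to $Y$, and I may assume $i=d$. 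Set $s_j := s_{\alpha_j}$, let $\Delta = X_d$ be the section of $\pi$, and take as inductive hypothesis the formula on $Z$, the case $d=0$ being vacuous.

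The heart of the matter is the line-bundle recursion
\begin{equation*}
  \O_d(\lambda) \isom \O_{d-1}(s_\beta\lambda)\otimes \O_X\bigl(-\langle\lambda,\beta^\vee\rangle\,X_d\bigr),
  \tag{$\ast$}
\end{equation*}
in which $\O_{d-1}(s_\beta\lambda) = \pi^*(\text{the analogous bundle on }Z)$, since $\phi_{d-1}=\phi_{d-1}^Z\circ\pi$. Because $\pi$ is a $\P^1$-bundle with section $\Delta$, one has $\Pic(X) = \pi^*\Pic(Z)\oplus\Z\,[\O_X(\Delta)]$ (the splitting used before Theorem~\ref{t.mainZ}), so to prove $(\ast)$ it suffices to match both sides on a fiber of $\pi$ and on the section $\Delta$. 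On a fiber $F$, the restriction $\phi_d|_F$ is a $B$-translate of the standard embedding $P_\beta/B \hookrightarrow G/B$, and since $\cL_\lambda$ is $G$-linearized this gives $\deg(\O_d(\lambda)|_F) = -\langle\lambda,\beta^\vee\rangle$; as $\pi^*(-)|_F$ is trivial and $\deg(\O_X(X_d)|_F)=1$, the fiber degrees agree. For the restriction to $\Delta = \{p_d=e\}$ one has $\phi_d = \phi_{d-1}$ there, so $\O_d(\lambda)|_\Delta = \O_{d-1}(\lambda)$, while $\pi|_\Delta$ is the identity on $Z$ and $\O_X(X_d)|_\Delta = N_{\Delta/X}$. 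Using $\lambda = s_\beta\lambda + \langle\lambda,\beta^\vee\rangle\beta$, these two restrictions agree exactly when $N_{\Delta/X}\isom\O_{d-1}(-\beta)$.

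It remains to compute this normal bundle. Here I invoke the identification $X = \P(\phi_{d-1}^*\mathcal{V}_\beta)$ coming from $0\to\cL_{\varpi_\beta}\to\mathcal{V}_\beta\to\cL_{s_\beta\varpi_\beta}\to 0$: by the construction in this section, $\Delta$ is the section cut out by the sub-line-bundle $\phi_{d-1}^*\cL_{\varpi_\beta}\hookrightarrow\phi_{d-1}^*\mathcal{V}_\beta$, so its normal bundle is the standard $\Hom(\text{sub},\text{quotient})$,
\[
  N_{\Delta/X}\isom \phi_{d-1}^*\bigl(\cL_{\varpi_\beta}^\vee\otimes\cL_{s_\beta\varpi_\beta}\bigr) = \phi_{d-1}^*\cL_{s_\beta\varpi_\beta-\varpi_\beta} = \O_{d-1}(-\beta),
\]
as $s_\beta\varpi_\beta-\varpi_\beta = -\langle\varpi_\beta,\beta^\vee\rangle\beta = -\beta$. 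This proves $(\ast)$. To finish, combine $(\ast)$ with the inductive hypothesis: since $s_\beta$ is self-adjoint for the pairing, $r_{d-1,j}(s_\beta\lambda) = \langle\lambda,\,s_\beta s_{d-1}\cdots s_j\alpha_j^\vee\rangle = r_{dj}(\lambda)$ for $j<d$, while $r_{dd}(\lambda) = \langle\lambda,s_\beta\alpha_d^\vee\rangle = -\langle\lambda,\beta^\vee\rangle$ matches the exponent of $X_d$; pulling back the formula for $\O_{d-1}(s_\beta\lambda)$ along $\pi$ (using $\pi^*X_j^Z = X_j$) then yields the asserted expression, and the equality with $\langle -\lambda,\,s_i\cdots s_{j+1}\alpha_j^\vee\rangle$ is immediate from $s_j\alpha_j^\vee = -\alpha_j^\vee$. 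The one delicate point is keeping the signs consistent in $(\ast)$: both the fiber-degree computation and the normal-bundle computation hinge on the convention—flagged in the footnote—that $\cL_\lambda$ has degree $-\langle\lambda,\beta^\vee\rangle$ on a $P_\beta$-orbit curve, and these must be carried out in agreement with the paper's normalization.
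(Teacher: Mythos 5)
Your proof is correct, and it takes essentially the route the paper itself points to: the paper gives no argument beyond citing Demazure, remarking that the main ingredient is exactly the fiber-degree fact $\deg\left(\O_d(\lambda)|_F\right) = -\langle \lambda,\alpha_d^\vee\rangle$ that drives your induction up the $\P^1$-bundle tower, and your recursion $(\ast)$ together with the normal-bundle identification $N_{\Delta/X}\isom \O_{d-1}(-\beta)$ (via the tautological sub-bundle $\O_d(\varpi_\beta)\hookrightarrow \phi_d^*\mathcal{V}_\beta$, which does match the paper's conventions) is the natural and correct completion of that sketch. One small slip worth fixing: over a general point of $Z$ the fiber embedding $\phi_d|_F$ is a $G$-translate, not a $B$-translate, of $P_\beta/B\hookrightarrow G/B$, but the $G$-linearization of $\cL_\lambda$ that you invoke is exactly what makes the degree computation valid in that generality.
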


\noindent
The main ingredient in the proof is the fact that $\OO_d(\lambda)$ has degree $-\langle \lambda,\alpha_d^\vee\rangle$ along the fiber of $\pi\colon X(\alpha_1,\ldots,\alpha_d)\to X(\alpha_1,\ldots,\alpha_{d-1})$ (see \cite[\S2.5, Lemme 2]{demazure}).

\begin{example}\label{ex.a2one}
Consider $G=SL_3$ and the root system of type $A_2$, with simple roots $\alpha$ and $\beta$.  The effective cone of $X=X(\alpha,\beta,\alpha,\beta)$ is generated by
\[
  X_1,\,X_2,\,X_3,\,X_4,\,-X_1+X_3+X_4.
\]
Indeed, $\Sigma_4 = \phi^{-1}X(s_\alpha s_\beta)$.  The Schubert divisor $X(s_\alpha s_\beta) \subseteq SL_3/B$ is a section of the line bundle $\cL_{-\varpi_\beta}$.  We compute the class of $\Sigma$ by using Lemma~\ref{l.expand} to expand $\phi^*\cL_{-\varpi_\beta} = \OO_4(-\varpi_\beta)$ in the $X_i$ basis, obtaining $\Sigma_4 = -X_1+X_3+X_4$.  Since $NS(X)_\R \isom \R^4$, this gives an example of a non-simplicial effective cone.

Extending this computation, for $\ul\alpha$ with $\alpha_i=\alpha$ for $i$ odd and $\alpha_i=\beta$ for $i$ even, one finds that both $X_i$ and $\Sigma_i$ span extremal rays of $\Eff X(\ul\alpha)$ whenever $i>3$, so if $\dim X(\ul\alpha)=d>3$, the effective cone has $2d-3$ extremal rays.
\end{example}

\begin{example}\label{ex.a2two}
Continuing the notation of the previous example (in type $A_2$), consider $X=X(\alpha,\beta,\alpha,\alpha)$.  The effective cone is generated by
\[
  X_1,\,X_2,\,X_3,\,X_4,\,X_1-X_2-X_3+X_4.
\]
To see this, we first compute the expansion of $\phi^*\cL_{-\varpi_\alpha}$ in the $X_i$ basis to obtain $X_1-X_3+X_4$.  However, one checks that $\phi^{-1}X(s_\beta s_\alpha) = X_2 \cup \Sigma_4$ (scheme-theoretically).  Subtracting $X_2$ gives $\Sigma_4 = X_1-X_2-X_3+X_4$.
\end{example}

\begin{example}\label{ex.c2}
Now consider $G=Sp_4$, of type $C_2$, with short root $\alpha$ and long root $\beta$.  (So $\langle \alpha,\beta^\vee \rangle = -1$ and $\langle \beta,\alpha^\vee \rangle = -2$.)  The effective cone of $X(\alpha,\beta,\alpha,\beta,\alpha,\beta)$ is generated by
\begin{align*}
  & X_1,\,X_2,\,X_3,\,X_4,\, X_5,\, X_6,\,\\
   & \quad {-X_1}+X_3+X_4+X_5,\,\, -2X_1-X_2+X_4+2X_5+X_6.
\end{align*}
The last two generators are $\Sigma_5$ and $\Sigma_6$, which are computed by using Lemma~\ref{l.expand} to expand $\OO_5(-\varpi_\alpha)$ and $\OO_6(-\varpi_\beta)$, respectively.
\end{example}

\begin{remark}\label{r.config}
The embedding $\phi_0\times \cdots \times \phi_d\colon X(\ul\alpha) \hookrightarrow (G/B)^{d+1}$ gives the Bott-Samelson variety a configuration space interpretation (cf.~\cite{magyar}).  This is especially vivid in type $A_2$: projectivizing the flags in $SL_3/B=\Fl(3)$, one has configurations of points and lines in $\PP^2$.  From this perspective, a general element of $X(\alpha,\beta,\alpha,\beta)$ looks like
\vspace{.2in}
\[
\left( \mbox{\qquad \psline[linestyle=solid,linewidth=1pt]{-}(-20,0)(20,0) \pscircle*(0,0){3} \qquad},\,
\mbox{ \qquad \psline[linestyle=solid,linewidth=1pt]{-}(-20,0)(20,0) \pscircle(0,0){3} \pscircle*(15,0){3} \qquad },\, 
\mbox{ \qquad \psline[linestyle=dashed]{-}(-20,0)(20,0) \pscircle(0,0){3} \pscircle*(15,0){3} \psline[linestyle=solid,linewidth=1pt]{-}(20,-3)(-10,15)\qquad },\,
\mbox{\qquad \psline[linestyle=dashed]{-}(-20,0)(20,0) \pscircle(0,0){3} \pscircle(15,0){3} \psline[linestyle=solid,linewidth=1pt]{-}(20,-3)(-10,15) \pscircle*(-5,12){3} \qquad },\,
\mbox{\qquad \psline[linestyle=dashed]{-}(-20,0)(20,0) \pscircle(0,0){3} \pscircle(15,0){3} \psline[linestyle=dashed]{-}(20,-3)(-10,15) \pscircle*(-5,12){3} \psline[linestyle=solid,linewidth=1pt]{-}(-2,18)(-13,-4) \qquad } \right),
\]

\noindent
where the first component is the standard flag $eB$.  The $B$-invariant divisors $X_i$ are the loci where the $i$th component agrees with the $(i-1)$st.  The $B$-invariant divisor $\Sigma$ described in Example~\ref{ex.a2one} is the locus where the last component is
\vspace{.2in}
\[
 \psline[linestyle=dashed]{-}(-20,0)(20,0) \pscircle(0,0){3} \pscircle(15,0){3} \psline[linestyle=dashed]{-}(20,-3)(-10,15) \pscircle*(-5,12){3} \psline[linestyle=solid,linewidth=1pt]{-}(-7.5,18)(2.5,-6) \qquad\quad .
\]

\noindent
That is, the last line passes through the first ($B$-fixed) point.
\end{remark}

\begin{remark}
Every Bott-Samelson variety is log Fano (Theorem~\ref{t.logfano}), so by \cite[Corollary~1.3.2]{bchm}, its Cox ring is finitely generated.  However, $\mathrm{Cox}(X(\ul\alpha))$ usually is not generated by the divisors $X_i$ and $\Sigma_i$ spanning $\Eff(X(\ul\alpha))$.

One already sees counterexamples in the reduced case.  For instance, continuing the notation of the type $A_2$ examples, the variety $X=X(\alpha,\beta)$ is isomorphic to the Hirzebruch surface $\mathbb{F}_1$, a $\PP^1$-bundle over $\PP^1$.  The divisor $X_1$ is a fiber of this bundle. One can also realize $X$ as the blowup of $\PP^2$ in one point, with the divisor $X_2$ identified the exceptional divisor.  We see that $h^0(X,\OO(X_1))=2$ and $h^0(X,\OO(X_2))=1$.  On the other hand,  $X$ is a toric surface with four $T$-invariant divisors, so one knows its Cox ring is isomorphic to $k[t_1,t_2,t_3,t_4]$, which cannot be generated by $H^0(X,\OO(X_1))$ and $H^0(X,\OO(X_2))$.
\end{remark}

\section{Example: nef and effective $2$-cycles}\label{s.exs}

The cone of $i$-dimensional pseudoeffective classes on a smooth projective $d$-dimensional variety $X$ is the closed convex cone $\Eff_i(X) \subseteq N_i(X)_\R$ generated by classes of $i$-dimensional subvarieties.  The cone of nef codimension-$i$ classes is defined as the positive dual of the $i$-dimensional pseudoeffective cone:
\[
  \Nef^i(X) = (\Eff_i(X))^\vee \subseteq (N_i(X)_\R)^* = N_{d-i}(X)_\R.
\]
More details on these and other notions of positivity can be found in \cite{delv,fl}.

In general, little is known about positive cones of higher-codimensional cycles.  Thanks to Theorem~\ref{t.main2}, however, some of these cones can be computed explicitly on Bott-Samelson varieties.  In this section, we will illustrate such an application by studying a particular example.  As in Example~\ref{ex.c2}, let $G=Sp_4$, with short root $\alpha$ and long root $\beta$.  Let $X=X(\alpha,\beta,\alpha,\beta)$; the word is reduced, and the map $\phi\colon X \to G/B$ is birational.

We begin by computing $\Eff_2(X)$.  Let $X_1,X_2,X_3,X_4$ be the standard $B$-invariant divisors.  From the description of $X$ as a tower of $\PP^1$-bundles, one sees that the six surfaces $X_{ij} = X_i \cap X_j$ (for $i\neq j$) form a basis for $N_2(X)$.

\begin{proposition}\label{p.eff2c2}
The cone $\Eff_2(X)$ is simplicial, with six rays generated by
\begin{align*}
 &  v_1 = X_{12},\; v_2 = X_{13},\; v_3 = X_{14},\; v_4 = X_{23}-X_{12}, \\ & \qquad v_5 = X_{24},\; v_6 = X_{34}-X_{23}-2X_{13}.
\end{align*}
\end{proposition}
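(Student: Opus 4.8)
The plan is to compute $\Eff_2(X)$ by invoking Theorem~\ref{t.main2}: since $\ul\alpha=(\alpha,\beta,\alpha,\beta)$ is reduced, $\Eff_2(X) = \sum_{i=1}^4 \Eff(X_i)$, where each $X_i$ is itself a three-dimensional Bott-Samelson variety and each $\Eff(X_i)$ is pushed forward into $N_2(X)_\R$. So first I would identify each $X_i$ as a Bott-Samelson variety via the isomorphism $X_i\isom X(\alpha_1,\ldots,\hat\alpha_i,\ldots,\alpha_d)$, giving $X_1\isom X(\beta,\alpha,\beta)$, $X_2\isom X(\alpha,\alpha,\beta)$, $X_3\isom X(\alpha,\beta,\beta)$, and $X_4\isom X(\alpha,\beta,\alpha)$. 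For each, I would apply Theorem~\ref{t.main} to write down the generators of $\Eff(X_i)$: the standard divisors of $X_i$ (which are surfaces $X_{ij}$ in $X$), together with a $\Sigma$-divisor in each non-reduced case. Here $X_2\isom X(\alpha,\alpha,\beta)$ and $X_3\isom X(\alpha,\beta,\beta)$ are non-reduced (each has a repeated root), so each contributes one extra extremal $\Sigma$; the words for $X_1$ and $X_4$ are reduced, contributing only standard divisors.

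The second step is the bookkeeping of identifying the standard divisors of each $X_i$ with the surfaces $X_{ij}$ in the common basis for $N_2(X)$, and computing the classes of the two $\Sigma$-divisors. The standard divisor of $X_i$ obtained by deleting the slot for $\alpha_j$ is exactly $X_i\cap X_j = X_{ij}$, so the faces contributed are spanned among the $X_{ij}$. For the two $\Sigma$ classes, I would compute them exactly as in Examples~\ref{ex.a2two} and \ref{ex.c2}: realize $\Sigma$ as a component of some $\tilde\phi^{-1}X(\cdot s_\gamma)$ inside the relevant three-fold $X_i$, expand the pulled-back line bundle $\O(-\varpi_\gamma)$ in the standard-divisor basis using Lemma~\ref{l.expand} with the type $C_2$ pairings $\langle\alpha,\beta^\vee\rangle=-1$ and $\langle\beta,\alpha^\vee\rangle=-2$, and subtract off any extraneous standard components (as in Example~\ref{ex.a2two}, where $\phi^{-1}X(s_\beta s_\alpha)$ picked up an extra $X_2$). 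This produces the two classes $v_4=X_{23}-X_{12}$ and $v_6=X_{34}-X_{23}-2X_{13}$, the coefficient $2$ tracing directly to $\langle\beta,\alpha^\vee\rangle=-2$; the remaining four generators $v_1=X_{12}$, $v_2=X_{13}$, $v_3=X_{14}$, $v_5=X_{24}$ are standard surfaces.

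Having assembled the six candidate generators, the final step is to show the cone they span is in fact $\Eff_2(X)$ and is simplicial. The inclusion $\Eff_2(X)\subseteq\sum\Eff(X_i)$ is Theorem~\ref{t.main2}, and each $v_k$ is effective (being either an $X_{ij}$ or a $\Sigma$ contributed by Theorem~\ref{t.main}), so the cone $C$ they span lies in $\Eff_2(X)$; the reverse inclusion follows because the $v_k$ are obtained by collecting \emph{all} the generators from $\sum\Eff(X_i)$ and discarding redundancies. To get simpliciality I would write the six $v_k$ in the $X_{ij}$ basis as a $6\times 6$ matrix (upper-triangular after ordering appropriately) and check it is invertible, so the six rays are linearly independent and $C$ is a simplicial cone of full dimension $6=\dim N_2(X)_\R$. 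The main obstacle here is the correct computation of the two $\Sigma$-classes: one must correctly track which Demazure product is being hit, which $B$-invariant components of the preimage $\tilde\phi^{-1}X(\cdot)$ must be subtracted, and feed the right $C_2$ coroot pairings into Lemma~\ref{l.expand}. The linear-algebra verification of simpliciality is then routine.
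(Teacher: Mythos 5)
Your architecture is the same as the paper's: decompose via Theorem~\ref{t.main2}, compute each $\Eff(X_i)$ by Theorem~\ref{t.main} (with $X_2\isom X(\alpha,\alpha,\beta)$ and $X_3\isom X(\alpha,\beta,\beta)$ the non-reduced cases), identify the standard divisors with the classes $X_{ij}$, and finish with the linear-algebra check of simpliciality. All of that matches the paper, and the assembly step at the end is fine. The gap is in the one place where the real work lies: the computation of $v_6=[\Sigma]$ for $X_3\isom X(\alpha,\beta,\beta)$.

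For that threefold, $\tilde\phi$ maps onto the Schubert surface $X(s_\alpha s_\beta)$, a \emph{proper} subvariety of $G/B$, and $\Sigma$ lies over the divisor $X(s_\alpha)\subset X(s_\alpha s_\beta)$. Your recipe --- expand $\O_3(-\varpi_\gamma)$ by Lemma~\ref{l.expand} and subtract extraneous components of the preimage --- implicitly assumes that $X(s_\alpha)$ is cut out on $X(s_\alpha s_\beta)$ by the restriction of $\cL_{-\varpi_\gamma}$; that was true in Examples~\ref{ex.a2one}, \ref{ex.a2two}, and \ref{ex.c2} only because there the image of $\tilde\phi$ was all of $G/B$ and the divisor in question was a genuine Schubert divisor $X(w_\circ s_\gamma)$. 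Here it fails: expanding $\O_3(-\varpi_\beta)$ for the word $(\alpha,\beta,\beta)$ gives $X_{34}-X_{23}$, since the coefficient of $X_{13}$ is $\pm\langle\varpi_\beta,\alpha^\vee\rangle=0$ --- so the pairing $\langle\beta,\alpha^\vee\rangle=-2$ you invoke never enters Lemma~\ref{l.expand} at all. Nor is there anything to subtract: unlike Example~\ref{ex.a2two}, the preimage $\tilde\phi^{-1}X(s_\alpha)$ is irreducible and equals $\Sigma$ with multiplicity one (Lemma~\ref{l.component}). Executed literally, your procedure therefore terminates with the wrong class $X_{34}-X_{23}$. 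The missing ingredient is exactly the observation the paper's proof flags in parentheses: on $X(s_\alpha s_\beta)\isom\mathbb{F}_2$, the divisor $X(s_\alpha)$ is the zero locus of a section of $\cL_{2\varpi_\alpha-\varpi_\beta}$ restricted to $X(s_\alpha s_\beta)$. Equivalently, by the Chevalley formula the semi-invariant section of $\cL_{-\varpi_\beta}$ restricted to $X(s_\alpha s_\beta)$ vanishes along $X(s_\alpha)+2X(s_\beta)$, so that $X_{34}-X_{23}=[\Sigma]+2X_{13}$; the multiplicity $2$ is a Chevalley coefficient, not an extra component of any preimage, and no inspection of components will produce it. With this input, expanding $\O_3(2\varpi_\alpha-\varpi_\beta)$ via Lemma~\ref{l.expand} gives $v_6=X_{34}-X_{23}-2X_{13}$ directly. (Your recipe happens to succeed for $X_2$, where the analogous coefficient is $\langle\varpi_\alpha,\alpha^\vee\rangle=1$; that is why $v_4$ comes out right and the failure is concentrated in $v_6$.)
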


\begin{proof}
To apply Theorem~\ref{t.main2}, we must compute $\Eff(X_i)$, for $1\leq i\leq 4$.  The cases $i=1$ and $i=4$ correspond to reduced words, so effective divisors are easy to describe: the cones are spanned by $X_{1j}$ ($2\leq j\leq 4$) and $X_{i4}$ ($1\leq i\leq 3$), respectively.  We have $X_2 \isom X(\alpha,\alpha,\beta)$, and Theorem~\ref{t.main} says
\[
  \Eff(X_2) = \langle X_{12},\; X_{23}-X_{12},\; X_{24} \rangle.
\]
Similarly, $X_3 \isom X(\alpha,\beta,\beta)$, and we compute
\[
  \Eff(X_3) = \langle X_{13},\; X_{23},\; X_{34} - X_{23} - 2X_{13} \rangle.
\]
(In the latter case, the computation requires the observation that the Schubert variety $X(s_\alpha)$ is defined by a section of the line bundle $\cL_{2\varpi_\alpha-\varpi_\beta}$, restricted to $X(s_\alpha s_\beta)$.)  The proposition follows.\qed
\end{proof}

To find $\Nef^2(X)$, we must compute the intersection form on $N_2(X)$ with respect to the basis $v_1,\ldots,v_6$.  This is straightforward, using the description of $A^*(X)$ given by Demazure.

\begin{lemma}[{\cite[\S4.2, Proposition 1]{demazure}}]\label{l.ring}
Let $\ul\alpha$ be an arbitrary word, and for $1\leq i\leq d$, let $x_i = [X_i]$ be the divisor class in $A^1(X(\ul\alpha))$.  The product in the Chow (or cohomology) ring is determined by
\begin{align*}
  x_i x_j & = [X_{ij}] \qquad \text{if } i\neq j; \\
  x_j^2   & = -\sum_{i=1}^{j-1} \langle \beta_j, \beta_i^\vee \rangle [X_{ij}],
\end{align*}
where the roots $\beta_i$ are defined by $\beta_i = s_{\alpha_1}\cdots s_{\alpha_{i-1}}(\alpha_i)$.
\end{lemma}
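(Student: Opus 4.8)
The plan is to establish the two formulas separately. The product $x_i x_j = [X_{ij}]$ for $i\neq j$ is immediate: the standard divisors form a normal crossings divisor, so $X_i$ and $X_j$ meet transversally and $X_{ij}=X_i\cap X_j$ is a reduced subvariety of codimension two, whence $x_i x_j = [X_{ij}]$. The real content is the self-intersection $x_j^2$, which I would compute via the self-intersection formula $x_j^2 = (i_j)_* c_1(N_{X_j/X})$, where $i_j\colon X_j\hookrightarrow X$ and $N_{X_j/X}$ is the normal bundle. The key geometric input is that this normal bundle is controlled entirely by the $j$-th stage of the $\P^1$-bundle tower.

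First I would identify the normal bundle. Since $X=X(\ul\alpha)$ maps to $X(\alpha_1,\ldots,\alpha_j)$ by the smooth morphism $\pi_j$ (a composition of $\P^1$-bundles), and $X_j = \pi_j^{-1}(\Delta_j)$ where $\Delta_j\subseteq X(\alpha_1,\ldots,\alpha_j)$ is the section defined by $p_j=e$, smoothness of $\pi_j$ gives $N_{X_j/X} = \pi_j^* N_{\Delta_j/X(\alpha_1,\ldots,\alpha_j)}$. Now $X(\alpha_1,\ldots,\alpha_j)=\P(\phi_{j-1}^*\mathcal{V}_{\alpha_j})$ and $\Delta_j$ is the section cut out by the distinguished sub-line-bundle $\O_{j-1}(\varpi_{\alpha_j})\hookrightarrow \phi_{j-1}^*\mathcal{V}_{\alpha_j}$ with quotient $\O_{j-1}(s_{\alpha_j}\varpi_{\alpha_j})$. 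The normal bundle of such a section is $\Hom$ of the sub into the quotient, so
\[
  c_1(N_{\Delta_j}) = c_1\big(\O_{j-1}(s_{\alpha_j}\varpi_{\alpha_j})\big) - c_1\big(\O_{j-1}(\varpi_{\alpha_j})\big) = c_1\big(\O_{j-1}(-\alpha_j)\big),
\]
using $s_{\alpha_j}\varpi_{\alpha_j}-\varpi_{\alpha_j}=-\alpha_j$. Pulling back and applying the projection formula yields $x_j^2 = c_1(\O_{j-1}(-\alpha_j))\cdot x_j$.

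It remains to expand $\O_{j-1}(-\alpha_j)$ in the basis $x_1,\ldots,x_{j-1}$ and recognize the coefficients. By Lemma~\ref{l.expand}, $c_1(\O_{j-1}(-\alpha_j)) = \sum_{i=1}^{j-1} r_{j-1,i}(-\alpha_j)\,x_i$, with $r_{j-1,i}(-\alpha_j) = \langle \alpha_j,\, s_{\alpha_{j-1}}\cdots s_{\alpha_{i+1}}\alpha_i^\vee\rangle$. Writing $\beta_i^\vee = s_{\alpha_1}\cdots s_{\alpha_{i-1}}(\alpha_i^\vee)$ and using $W$-invariance of the pairing, one finds $\langle\beta_j,\beta_i^\vee\rangle = \langle \alpha_j,\, s_{\alpha_{j-1}}\cdots s_{\alpha_{i+1}} s_{\alpha_i}\alpha_i^\vee\rangle$; since $s_{\alpha_i}\alpha_i^\vee = -\alpha_i^\vee$, this equals $-r_{j-1,i}(-\alpha_j)$. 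Substituting and using $x_i x_j = [X_{ij}]$ gives $x_j^2 = -\sum_{i<j}\langle\beta_j,\beta_i^\vee\rangle[X_{ij}]$, as claimed.

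The main obstacle is bookkeeping rather than conceptual: I must pin down precisely which sub-line-bundle of $\phi_{j-1}^*\mathcal{V}_{\alpha_j}$ the section $X_j$ corresponds to (and hence the sign in $c_1(N)=c_1(\text{quotient})-c_1(\text{sub})$), and then carry the Weyl-group manipulation through correctly. Both are easy to get backwards, so I would verify the signs on the smallest non-reduced example $X(\alpha,\alpha)\isom\P^1\times\P^1$: there $\beta_2=-\alpha$ gives $x_2^2 = -\langle-\alpha,\alpha^\vee\rangle[X_{12}] = 2[X_{12}]$, matching the self-intersection $2$ of the diagonal.
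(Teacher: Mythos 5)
Your proof is correct. Note that the paper itself offers no argument for this lemma: it is quoted directly from Demazure \cite[\S4.2, Proposition~1]{demazure}, so there is no internal proof to compare against; your write-up is essentially the standard (Demazure-style) argument, rebuilt from the paper's own toolkit. The two delicate points both check out against the paper's stated conventions. First, the identification of the section with the sub-line-bundle $\O_{j-1}(\varpi_{\alpha_j})\hookrightarrow\phi_{j-1}^*\mathcal{V}_{\alpha_j}$ is exactly the convention the paper fixes when it declares the universal quotient bundle to be the dual of the inclusion $\O_{d+1}(\varpi_\beta)\to\phi_{d+1}^*\mathcal{V}_\beta$ (restricting the universal sub to the diagonal section, where $\phi_{d+1}$ agrees with $\phi_d$, recovers $\O_d(\varpi_\beta)$), so $c_1(N)=c_1(\O_{j-1}(s_{\alpha_j}\varpi_{\alpha_j}))-c_1(\O_{j-1}(\varpi_{\alpha_j}))=c_1(\O_{j-1}(-\alpha_j))$ has the right sign. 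Second, your Weyl-group manipulation is right: $\langle\beta_j,\beta_i^\vee\rangle=\langle\alpha_j,\,s_{\alpha_{j-1}}\cdots s_{\alpha_{i+1}}s_{\alpha_i}\alpha_i^\vee\rangle=-r_{j-1,i}(-\alpha_j)$ after the telescoping cancellation, which is consistent with the paper's fiber-degree convention ($\O_d(\lambda)$ has degree $-\langle\lambda,\alpha_d^\vee\rangle$ on fibers of $\pi$). One small remark: invoking Lemma~\ref{l.expand} is not circular within this paper, since that lemma has an independent proof via degrees on fibers, even though it is attributed to the same proposition of Demazure; and your $\P^1\times\P^1$ sanity check is the right one.
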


We find that the intersection matrix is
\[
  A = (v_i\cdot v_j) 
  = \left(\begin{array}{rrrrrr}
  0 & 0 & 0 & 0 & 0 & 1 \\
  0 & 0 & 0 & 0 & 1 & -1 \\
  0 & 0 & 0 & 1 & -2 & 1 \\
  0 & 0 & 1 & 0 & -2 & 1 \\
  0 & 1 & -2 & -2 & 4 & -2 \\
  1 & -1 & 1 & 1 & -2 & 2 \end{array}\right),
\]
and its inverse is
\[
  A^{-1} 
  = \left(\begin{array}{rrrrrr}
  0 & 2 & 1 & 1 & 1 & 1 \\
  2 & 4 & 2 & 2 & 1 & 0 \\
  1 & 2 & 0 & 1 & 0 & 0 \\
  1 & 2 & 1 & 0 & 0 & 0 \\
  1 & 1 & 0 & 0 & 0 & 0 \\
  1 & 0 & 0 & 0 & 0 & 0\end{array}\right).
\]
Expressed in the $v$ basis, the rays of the nef cone $\Nef^2(X)$ are the columns of $A^{-1}$.  Translating this into the $X_{ij}$ basis, we can state the result as follows:

\begin{proposition}\label{p.nef2c2}
The nef cone $\Nef^2(X)$ is the simplicial cone whose rays are generated by
\begin{align*}
 &  -X_{12} + X_{14}  + X_{24} + X_{34}, \; \; 
    4X_{13} + 2X_{14} + 2X_{23} + X_{24}, \\
  &  2X_{13} + X_{23}, \;\;  X_{12} + 2X_{13} + X_{14}, \;\;
    X_{12} + X_{13}, \;\;  X_{12};
\end{align*}
All nef classes are effective.
\end{proposition}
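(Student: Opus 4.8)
The plan is to exploit the duality defining $\Nef^2(X) = (\Eff_2(X))^\vee$ together with Proposition~\ref{p.eff2c2}, which shows $\Eff_2(X)$ is the simplicial cone spanned by the six (necessarily linearly independent) classes $v_1,\ldots,v_6$. The perfect intersection pairing on dimension-two cycles identifies $N_2(X)_\R$ with its own dual, so a class $\gamma$ is nef precisely when $\gamma\cdot v_i\geq 0$ for all $i$. Writing $\gamma=\sum_j c_j v_j$ and $A=(v_i\cdot v_j)$, this becomes $Ac\geq 0$ componentwise, so $\Nef^2(X)$ is the image of the nonnegative orthant under $A^{-1}$. In particular it is again simplicial, with extremal rays given by the columns of $A^{-1}$ written in the $v$-basis. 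Thus the whole proposition reduces to computing $A$, inverting it, and reading off signs.

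The computational core is the symmetric matrix $A$. I would first evaluate the quartic intersection numbers $X_{ij}\cdot X_{kl}=x_i x_j x_k x_l$ in the Chow ring using Lemma~\ref{l.ring}. For the word $(\alpha,\beta,\alpha,\beta)$ one computes the four roots $\beta_i=s_{\alpha_1}\cdots s_{\alpha_{i-1}}(\alpha_i)$ from the type-$C_2$ Cartan data $\langle\alpha,\beta^\vee\rangle=-1$ and $\langle\beta,\alpha^\vee\rangle=-2$, which feeds into the self-intersection relations $x_j^2=-\sum_{i<j}\langle\beta_j,\beta_i^\vee\rangle[X_{ij}]$. Normalizing by $x_1 x_2 x_3 x_4=1$ (the class of a point), every quartic monomial in the $x_i$ collapses to an integer. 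Expanding the explicit expressions for $v_1,\ldots,v_6$ in the $X_{ij}$ and extending bilinearly then produces $A$.

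With $A$ in hand, inverting it is routine and yields the displayed $A^{-1}$; translating its columns from the $v$-basis back to the $X_{ij}$ basis gives exactly the six generators in the statement, which settles the first assertion. The final claim that every nef class is effective then follows from a single observation: every entry of $A^{-1}$ is nonnegative. Since the generators of $\Nef^2(X)$ are the columns of $A^{-1}$ expressed in the $v$-basis, nonnegativity of these coordinates means each generator is a nonnegative combination of $v_1,\ldots,v_6$, hence lies in $\Eff_2(X)$. Therefore $\Nef^2(X)\subseteq\Eff_2(X)$.

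I expect the main obstacle to be the bookkeeping in assembling $A$ rather than anything conceptual: one must track the four roots $\beta_i$ and the resulting self-intersection coefficients correctly in type $C_2$, and reduce each product $v_i\cdot v_j$ consistently to the point class. The short-root/long-root asymmetry makes coefficient and sign errors easy, so the reduction should be organized carefully (for instance by tabulating all $X_{ij}\cdot X_{kl}$ once and reusing them). Everything downstream of the correct matrix $A$—the inversion, the change of basis, and the effectivity conclusion—is immediate from the nonnegativity of $A^{-1}$.
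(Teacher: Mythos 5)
Your proposal is correct and follows essentially the same route as the paper: the paper likewise computes the intersection matrix $A=(v_i\cdot v_j)$ in the $v$-basis using Lemma~\ref{l.ring}, inverts it, and identifies the rays of $\Nef^2(X)$ with the columns of $A^{-1}$ translated back to the $X_{ij}$-basis. The only difference is one of explicitness—you spell out the duality bookkeeping ($Ac\geq 0$, hence $\Nef^2(X)=A^{-1}(\text{nonnegative orthant})$) and the observation that nonnegativity of the entries of $A^{-1}$ yields $\Nef^2(X)\subseteq\Eff_2(X)$, both of which the paper leaves implicit.
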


Similar calculations show that $\Nef^2(X) \subseteq \Eff_2(X)$ for all four-dimensional Bott-Samelson varieties corresponding to reduced words.  In fact, these all have finitely many $B$-orbits (as can be seen from Proposition~\ref{p.dense} below), so the argument of \cite[Example~4.5]{fl} shows that all nef classes are effective.  It would be interesting to know whether this remains true in higher dimensions.

\section{Dense $B$-orbits}\label{s.b-orbit}

Although we now have a finite list of generators for $\Eff(X(\ul\alpha))$, for any $\ul\alpha$, let us return to the principle sketched in the introduction and ask when $X(\ul\alpha)$ has a dense $B$-orbit.  There is a simple criterion.

Let $w=w(\ul\alpha)$, and choose a subword $(\alpha_{i_1},\ldots,\alpha_{i_\ell})$ that is reduced for $w$, so $w=s_{i_1}\cdots s_{i_\ell}$ and $\ell=\ell(w)$.  Let $m = d-\ell$; this is the dimension of the fiber $\phi^{-1}(w)$.

\begin{proposition}\label{p.dense}
Let $X=X(\ul\alpha)$.  Suppose the $m$ characters
\begin{equation}\label{e.weights}
\begin{array}{c}
  \alpha_1,\ldots,\alpha_{i_1-1}, \\
  s_{i_1}(\alpha_{i_1+1}), \ldots, s_{i_1}(\alpha_{i_2-1}), \\
  s_{i_1}s_{i_2}(\alpha_{i_2+1}), \ldots, s_{i_1}s_{i_2}(\alpha_{i_3-1}), \\
    \vdots \\
  w(\alpha_{i_\ell+1}) , \ldots, w(\alpha_d)
\end{array}
\end{equation}
are linearly independent in $\liet^*$.  Then $X$ has a dense $B$-orbit.
\end{proposition}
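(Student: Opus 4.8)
The plan is to reduce the existence of a dense $B$-orbit to a linear-algebra statement about a maximal torus acting on the fiber $F=\phi^{-1}(w)$, in which the listed characters appear as tangent weights. Throughout I use that $F$ is nonsingular (Lemma~\ref{l.smooth}).

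First I would reduce the problem from $B$ acting on $X$ to $B_w$ acting on $F$. For $x\in F$, composing the orbit map with $\phi$ surjects $B\cdot x$ onto $B\cdot w = X(w)^\circ$, and the fiber of $B\cdot x \to X(w)^\circ$ over $w$ is exactly $B_w\cdot x$: indeed $b\cdot x\in F$ forces $b\cdot w=w$, i.e. $b\in B_w$. Hence $\dim B\cdot x = \ell + \dim B_w\cdot x$. Now $X$ is irreducible of dimension $d$, and $F$ is irreducible of dimension $m=d-\ell$ (irreducibility of $F$ follows from the isomorphism $\phi^{-1}X(w)^\circ\cong F\times X(w)^\circ$ of Lemma~\ref{l.smooth}, this being an open subset of the irreducible variety $X$). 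Since $T\subseteq B_w$, it follows that $X$ has a dense $B$-orbit as soon as $T$ has a dense orbit on $F$.

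Next I would locate a convenient $T$-fixed point of $F$ and compute the $T$-action near it. Writing $J=\{i_1,\dots,i_\ell\}$ for the positions of the chosen reduced subword, the point $x_J=[h_1,\dots,h_d]$ with $h_i=\dot s_{\alpha_i}$ for $i\in J$ and $h_i=e$ otherwise lies in $F$ and is fixed by $T$. At such a Bott-Samelson fixed point $T_{x_J}X$ splits into $d$ weight lines, the line in position $i$ carrying weight $\pm v(\alpha_i)$, where $v$ is the product of the subword reflections $s_{\alpha_{i_k}}$ with $i_k<i$; this is checked inductively from the iterated $\P^1$-bundle structure $X(\alpha_1,\dots,\alpha_i)=X(\alpha_1,\dots,\alpha_{i-1})(\alpha_i)$, the $i$-th line being tangent to the fiber $\P^1$. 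Since $T$ is reductive, $T_{x_J}X$ splits $T$-equivariantly as $\ker d\phi_{x_J}\oplus\operatorname{im}d\phi_{x_J}$. I would then check that the $\ell$ lines in positions $i\in J$ map isomorphically onto distinct weight spaces of $T_w(G/B)$, namely the weight spaces indexed by the inversion set $w(R^-)$ of $w$ (one verifies $w^{-1}v(\alpha_i)\in R^-$ for $i=i_k\in J$). By dimension count these account for all of $\operatorname{im}d\phi_{x_J}$, so $T_{x_J}F=\ker d\phi_{x_J}$ is the sum of the remaining $m$ lines, indexed by the non-subword positions, and its weights are precisely the $m$ characters in \eqref{e.weights}, up to sign.

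Finally I would conclude by linearizing. Assuming the $m$ characters are linearly independent, the generic stabilizer of the linear $T$-action on $T_{x_J}F$ is $\bigcap_j \ker\chi_j$, of codimension $m$; hence a point of $F$ near $x_J$ whose weight-coordinates are all nonzero has a $T$-orbit of dimension $m=\dim F$, which is therefore open and, since $F$ is irreducible, dense. By the reduction of the first paragraph this yields a dense $B$-orbit on $X$. I expect the main obstacle to be the weight computation of the third paragraph: identifying $T_{x_J}F$ with $\ker d\phi_{x_J}$ and pinning down its weights as exactly the characters of \eqref{e.weights}. The bookkeeping — that the $J$-indexed directions realize $\operatorname{im}d\phi_{x_J}$ through the inversion set of $w$, so that the complementary directions carry the advertised weights — is where the reduced-subword hypothesis enters and is the step requiring the most care.
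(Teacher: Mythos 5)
Your proposal is correct and follows essentially the same route as the paper's proof: reduce to the existence of a dense $T$-orbit on the fiber $\phi^{-1}(w)$, compute the $T$-weights at the fixed point determined by the reduced subword via the iterated $\P^1$-bundle structure, identify the weights on $T_{x_J}\phi^{-1}(w)$ as the complement of the inversion set $R^+\cap w(R^-)$ (i.e., the negatives of the characters in \eqref{e.weights}), and invoke the criterion that a smooth variety has a dense torus orbit when the tangent weights at a fixed point are linearly independent. The only stylistic difference is that the paper gets the weight multiset of the fiber directly as a multiset complement using the splitting $\phi^{-1}X(w)^\circ\isom \phi^{-1}(w)\times X(w)^\circ$, rather than tracking which individual tangent lines lie in $\ker d\phi$, which sidesteps any concern about repeated weights.
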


The idea is to determine the weights of the $T$-action on the tangent space to a fixed point of $\phi^{-1}(w)$.  The condition on characters ensures that $\phi^{-1}(w)$ is the closure of a $T$-orbit in $X$.

\begin{proof}
From the proof of Lemma~\ref{l.smooth}, we know that $\phi^{-1}X(w)^\circ \isom X(w)^\circ \times \phi^{-1}(w)$.  Since $X(w)^\circ$ is isomorphic to $U^w$, and $B$ is isomorphic to $B_w \times U^w$ (as varieties), $X$ has a dense $B$ orbit if and only if $\phi^{-1}(w)$ has a dense $B_w$-orbit.  Of course, it will suffice to show that $\phi^{-1}(w)$ has a dense $T$-orbit.

Let $\ul\epsilon = (\epsilon_1,\ldots,\epsilon_d)$, with each $\epsilon_i$ being either $e$ or $s_i$ in $W$.  Abusing notation slightly, we also write $\ul\epsilon = [\dot\epsilon_1,\ldots,\dot\epsilon_d]$ for the corresponding point of $X$.  These $2^d$ points are exactly the $T$-fixed points of $X$ (see, e.g., \cite{willems}).  Using the description of $X$ as an iterated $\PP^1$-bundle, one can check that the tangent bundle has a filtration with line bundle factors
\begin{align}\label{e.tangent}
  \OO_1(-\alpha_1),\, \OO_2(-\alpha_2),\, \ldots,\, \OO_d(-\alpha_d),
\end{align}
and so the multi-set of $T$-weights on the tangent space $T_{\ul\epsilon}X$ is
\[
 \Pi_\epsilon = \{ \epsilon_1(-\alpha_1), \, \epsilon_1\epsilon_2(-\alpha_2),\,\ldots,\, \epsilon_1\cdots\epsilon_d(-\alpha_d) \}.
\]

The fixed points of $\phi^{-1}(w)$ are those $\ul\epsilon$ such that $\epsilon_1 \cdots \epsilon_d = w$ (taking the usual product, not the Demazure product).  Take a reduced subsequence for $w$ as above, and let $\ul\epsilon$ be the corresponding point of $\phi^{-1}(w)$, so $\epsilon_{i_j} = s_{i_j}$, and all other $\epsilon_i$'s are $e$.  The $T$-weights on $T_wX(w)^\circ$ are
\[
  R^+ \cap w(R^-) = \{ \alpha_{i_1}, s_{i_1}(\alpha_{i_2}), \ldots, s_{i_1}\cdots s_{i_{\ell-1}}(\alpha_\ell) \},
\]
so the set of weights on $T_{\ul\epsilon}\phi^{-1}(w)$ is the (multi-set) complement $\Pi_{\ul\epsilon} \setminus (R^+ \cap w(R^-))$; these are the negatives of the characters listed in \eqref{e.weights}.  In general, a nonsingular variety $X$ has a dense torus orbit if and only if at some (equivalently, any) fixed point $x$, the weights on $T_xX$ are linearly independent, so the proposition is proved.\qed
\end{proof}

\begin{example}
Continuing the notation for type $C_2$ from Example~\ref{ex.c2}, the criterion shows that $X(\alpha,\beta,\alpha,\beta,\alpha,\beta)$ has a dense $B$-orbit.  In the theorem, take the reduced subword $(\alpha_3,\ldots,\alpha_6)$; the two characters $\alpha_1=\alpha$ and $\alpha_2=\beta$ are certainly linearly independent.  In fact, $w=w_\circ$ in this case, and the fiber $\phi^{-1}(w)$ can be identified with the blowup of $X(s_\alpha s_\beta)$ (which is isomorphic to the Hirzebruch surface $\mathbb{F}_2$) at a torus-fixed point.  The complement of the dense $B$-orbit is the union of the divisors $X_i$ ($1\leq i\leq 6$) together with $\Sigma_5$ and $\Sigma_6$.
\end{example}

It seems reasonable to conjecture that the converse to Proposition~\ref{p.dense} holds.  This would be immediate if $U_w$ acts trivially on $\phi^{-1}(w)$, but I do not know if this is true.

\begin{remark}\label{r.nodense}
One may wonder whether a solvable group larger than $B$ acts on $X=X(\ul\alpha)$, as happens, for instance, when all $\alpha_i$ are the same, so that $X\isom (\PP^1)^d$.  From the definition, it is evident that $P_{\alpha_1}$ acts on $X$, but in fact, usually $\Aut^\circ(X)$ is not too much larger.

The situation can be described more precisely, as follows.  Let us assume $G$ is finite-dimensional, simple, and adjoint, and write $P_i=P_{\alpha_i}$.  The homomorphism $\Pic^\circ((G/B)^{d+1}) \to \Pic^\circ(X)$ induced by the embedding \eqref{e.config} is surjective, and since $\Pic^\circ(X)$ is trivial, $\Aut^\circ(X)$ is affine.  Since a sufficiently divisible power of an ample line bundle can be linearized for $\Aut^\circ(X)$ (by \cite{sumihiro,kklv}), it follows that $\Aut^\circ(X)$ embeds in $\Aut^\circ((G/B)^{d+1})$, and the latter may be identified with $G^{d+1}$ (see \cite{dem-aut}).  Furthermore, writing $\Gamma\subseteq \Aut^\circ((G/B)^{d+1})$ for the subgroup of automorphisms preserving $X$, the homomorphism $\Gamma \to \Aut^\circ(X)$ is surjective.  This group $\Gamma$ can be bounded, using the following proposition.

\begin{proposition}\label{p.aut}
For $1\leq i\leq d$, let $X(i) = X(\alpha_1,\ldots,\alpha_i)$, and let $\Gamma(i)$ be the subgroup of $\Aut^\circ((G/B)^{i+1})\isom G^{i+1}$ preserving $X(i)$. 
If $P_1\cdots P_i=G$, then $\Gamma(i+1)=\Gamma(i)$.
\end{proposition}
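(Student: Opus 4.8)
The plan is to show that the projection $\pr\colon G^{i+2}\to G^{i+1}$ onto the first $i+1$ factors restricts to an isomorphism $\Gamma(i+1)\xrightarrow{\sim}\Gamma(i)$; this is the precise meaning of the claimed equality, since $\Gamma(i+1)$ and $\Gamma(i)$ literally live in different groups. Under the embedding \eqref{e.config}, a point of $X(i+1)$ is a tuple of flags $(F_0,\ldots,F_{i+1})$ with $F_0=eB$ and with $F_{j-1}$, $F_j$ lying in a common fiber of $G/B\to G/P_{\alpha_j}$ for each $1\le j\le i+1$. An element $\gamma=(g_0,\ldots,g_{i+1})\in G^{i+2}$ acts coordinatewise, and (since $X(i+1)$ is closed and irreducible and $\gamma$ is an automorphism) $\gamma$ preserves $X(i+1)$ exactly when $\gamma\cdot(F_0,\ldots,F_{i+1})$ satisfies all the defining incidence conditions at every point. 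First I would record that these conditions split: the requirement $g_0F_0=eB$ together with the incidence conditions at steps $1,\ldots,i$ involve only $(g_0,\ldots,g_i)$ and the truncated flags $(F_0,\ldots,F_i)$, and since $\pi_i\colon X(i+1)\to X(i)$ is surjective they hold for all points precisely when $(g_0,\ldots,g_i)\in\Gamma(i)$. Thus $\pr$ carries $\Gamma(i+1)$ into $\Gamma(i)$, and the content of the proposition is that the remaining step-$(i+1)$ condition pins down $g_{i+1}$.

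Next I would analyze the step-$(i+1)$ condition. Writing $F_i=xB$ (so $F_{i+1}=xpB$ with $p\in P_{\alpha_{i+1}}$), a direct computation gives that $g_iF_i$ and $g_{i+1}F_{i+1}$ lie in a common $P_{\alpha_{i+1}}$-fiber if and only if $x^{-1}(g_i^{-1}g_{i+1})x\in P_{\alpha_{i+1}}$, independently of $p$. As the point ranges over $X(i+1)$, the flag $F_i$ ranges over $\im\phi_i=(P_1\cdots P_i)/B$. This is where the hypothesis enters: when $P_1\cdots P_i=G$ we have $\im\phi_i=G/B$, so $x$ ranges over all of $G$, and the condition becomes
\[
  g_i^{-1}g_{i+1}\in\bigcap_{x\in G} x\,P_{\alpha_{i+1}}\,x^{-1}.
\]
This intersection is the kernel of the $G$-action on $G/P_{\alpha_{i+1}}$, a closed normal subgroup contained in the proper parabolic $P_{\alpha_{i+1}}$; since $G$ is simple and adjoint, its only such normal subgroup is trivial, forcing $g_{i+1}=g_i$. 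Hence $\pr$ is injective on $\Gamma(i+1)$.

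Finally, for surjectivity I would check that any $(g_0,\ldots,g_i)\in\Gamma(i)$ extends to $\Gamma(i+1)$ by setting $g_{i+1}=g_i$: left multiplication by $g_i$ sends the fiber of $G/B\to G/P_{\alpha_{i+1}}$ through $F_i$ to the fiber through $g_iF_i$, so the step-$(i+1)$ incidence is automatically preserved, while the earlier conditions hold because $(g_0,\ldots,g_i)\in\Gamma(i)$. Combined with the previous paragraph, $\pr$ is a bijection, and in fact a group isomorphism, $\Gamma(i+1)\xrightarrow{\sim}\Gamma(i)$. I expect the only real obstacle to be the middle step: correctly reducing the geometric incidence condition to the algebraic membership $x^{-1}(g_i^{-1}g_{i+1})x\in P_{\alpha_{i+1}}$, and justifying that the hypothesis $P_1\cdots P_i=G$ is exactly what makes $x$ range over all of $G$ (equivalently, that $\phi_i$ becomes surjective). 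Once this is in place, triviality of $\bigcap_{x} x\,P_{\alpha_{i+1}}\,x^{-1}$ for simple adjoint $G$ is standard.
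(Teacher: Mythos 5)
Your proof is correct and takes essentially the same route as the paper's: both reduce the step-$(i+1)$ incidence condition to the membership $g_i^{-1}g_{i+1}\in\bigcap_{x\in P_1\cdots P_i} xP_{i+1}x^{-1}$ (the paper phrases this via normalizers of conjugate parabolics, you via a direct coset computation), and then invoke simplicity and adjointness of $G$ to conclude this intersection is trivial when $P_1\cdots P_i=G$. The only difference is bookkeeping: you make explicit the meaning of $\Gamma(i+1)=\Gamma(i)$ as an isomorphism under projection and verify the extension $g_{i+1}=g_i$ lands in $\Gamma(i+1)$, points the paper leaves implicit.
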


\begin{proof}
The groups $\Gamma(i+1)$ can be described inductively as
\[
 \Gamma(i+1) = \left\{ (g_0,g_1,\ldots,g_{i+1}) \;\left|\; (g_0,\ldots,g_i)\in \Gamma(i), \; g_{i+1}^{-1} g_{i} \in \bigcap_{x\in P_1\cdots P_i} x P_{i+1} x^{-1} \right.\right\}.
\]
Indeed,
\[
  X(i+1) \isom (P_1/B) \times_{G/P_2} (P_1P_2/B) \times \cdots \times (P_1\cdots P_i/B) \times_{G/P_{i+1}} (P_1\cdots P_{i+1}/B),
\]
so the condition on  $\Gamma(i+1)$ is that
\begin{align*}
  g_i \cdot x P_{i+1} x^{-1} g_i^{-1} &= g_{i+1} xx_{i+1} P_{i+1} x_{i+1}^{-1}x^{-1} g_{i+1}^{-1} \\
  &= g_{i+1} xP_{i+1}x^{-1} g_{i+1}^{-1},
\end{align*}
that is, $g_{i+1}^{-1}g_i$ normalizes $xP_{i+1}x^{-1}$ for all $x\in P_1\cdots P_i$, or equivalently, $g_{i+1}^{-1}g_i$ lies in $xP_{i+1}x^{-1}$ for all such $x$.  Since $G$ is simple and $P_1\cdots P_i = G$, this intersection on the RHS is the trivial group, so that $g_{i+1}=g_i$.\qed
\end{proof}

In particular, if $\ul\alpha$ is a sufficiently long (non-reduced) word, then no group acts on $X(\ul\alpha)$ with a dense orbit.
\end{remark}

\section{Intersections of Schubert varieties}\label{s.richardson}

The hypothesis of Proposition~\ref{p.dense} says that the action of $T$ makes $\phi^{-1}(w)$ a toric variety with respect to some quotient of $T$.  For a special class of Bott-Samelson varieties, we will see an equivalent alternative that can be checked directly on the flag variety $G/B$.

Given $u,v$ in $W$, let $w = u\star v$ be their Demazure product.  We will consider the intersection $X(u) \cap w\cdot X(v)$ of translated Schubert varieties in $G/B$.  In the finite-dimensional case, if $w=w_\circ$ this is the Richardson variety $X_u^{w_\circ v}$, i.e., the intersection of $X(u)$ with the opposite Schubert variety $Y(w_\circ v) = \overline{B^-w_\circ vB/B}$.  (Conversely, $X(u)\cap Y(w_\circ v)$ is empty unless $u\star v= w_\circ$.)  In the infinite-dimensional case, however, Richardson varieties usually cannot be written as intersections of translated Schubert varieties in this way.\footnote{Ed Richmond has pointed out that in affine type $\hat{A}_1$, given elements $x,y\in W$, one can find $u,v\in W$ such that $X^{y}_{x} = X(u)\cap w\cdot X(v)$ if and only if $\dim X^y_x = \ell(x)-\ell(y)=1$.}

Suppose $\ul\alpha = (\alpha_1,\ldots,\alpha_k)$ is a reduced word for $u$ and $\ul\beta = (\beta_1,\ldots,\beta_\ell)$ is a reduced word for $v$.  Write ${\ul\beta^{-1}}=(\beta_\ell,\ldots,\beta_1)$, so this is a reduced word for $v^{-1}$.

\begin{proposition}\label{p.resolution}
Let $f\colon \phi^{-1}(w) \to G/B$ be the morphism given by restricting $\phi_k$.  Then $f$ is birational onto its image, which is $X(u) \cap w\cdot X(v)$, so $f$ is a desingularization of this intersection of translated Schubert varieties.
\end{proposition}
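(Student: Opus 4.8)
The plan is to prove three things: that the image of $f$ is $X(u)\cap w\cdot X(v)$, that $f$ is birational onto this image, and to verify these via an explicit parametrization of points of $\phi^{-1}(w)$. The starting observation is that a point of $X(\ul\alpha,\ul\beta)$ is a sequence $(eB, p_1B,\ldots,p_1\cdots p_kB, p_1\cdots p_{k+1}B,\ldots,p_1\cdots p_{k+\ell}B)$ under the embedding \eqref{e.config}, where $p_j\in P_{\alpha_j}$ for $j\leq k$ and $p_{k+j}\in P_{\beta_j}$. The map $\phi=\phi_{k+\ell}$ records the last flag $p_1\cdots p_{k+\ell}B$, and the fiber $\phi^{-1}(w)$ consists of those configurations whose last flag equals the fixed point $\dot w B$. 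The map $f$ records instead the $k$th flag $p_1\cdots p_kB=\phi_k$, which by construction lies in $X(u)$ since $(\alpha_1,\ldots,\alpha_k)$ is a reduced word for $u$.

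First I would identify the image. The key point is that, reading the word backwards, the condition that the last flag is $\dot wB$ while the $k$th flag is $\phi_k$ forces $\phi_k$ to lie in $w\cdot X(v)$ as well. Concretely, since $\ul\beta$ is reduced for $v$, the partial products $p_1\cdots p_k,\, p_1\cdots p_{k+1},\ldots$ trace out a gallery from $\phi_k$ to $\dot wB$ of ``$\beta$-type'' steps, which is exactly the condition that $\phi_k\in \dot w\cdot X(v^{-1})\dots$ — more carefully, that $\dot w^{-1}\phi_k\in X(v)$, i.e. $\phi_k\in w\cdot X(v)$. (Here I would use that $w=u\star v$ is the Demazure product, so that the tail of the word, running from position $k$ to the end and landing at $w$, describes precisely a point of the Bott-Samelson variety $X(\ul\beta)$ attached to the flag $\phi_k$ with endpoint $w$; the reducedness of $\ul\beta$ then pins down membership in $w\cdot X(v)$ via $\phi_k = w\cdot(\text{point of }X(v))$.) Conversely, given any flag $F\in X(u)\cap w\cdot X(v)$, reducedness of both $\ul\alpha$ and $\ul\beta$ lets me build a gallery from $eB$ to $F$ (of $\alpha$-type) and from $F$ to $w$ (of $\beta$-type), producing a preimage point in $\phi^{-1}(w)$. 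This shows $f$ is surjective onto $X(u)\cap w\cdot X(v)$.

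Next I would establish birationality. Over the dense open locus of $X(u)\cap w\cdot X(v)$ where the flag $F$ lies in the open cell $X(u)^\circ$ and simultaneously $\dot w^{-1}F$ lies in $X(v)^\circ$, the reducedness of $\ul\alpha$ (which makes $\phi_k\colon X(\ul\alpha)\to X(u)$ birational, an isomorphism over $X(u)^\circ$) and of $\ul\beta$ together determine the $p_j$ uniquely up to the $B$-ambiguity already quotiented out. So $f$ restricts to an isomorphism over this open set, giving birationality. Finally, smoothness of $\phi^{-1}(w)$ is exactly Lemma~\ref{l.smooth}, applied to $\phi\colon X(\ul\alpha,\ul\beta)\to X(w)$; combined with surjectivity, birationality, and properness (the whole Bott-Samelson variety is projective, hence so is the fiber), this yields that $f$ is a desingularization.

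The main obstacle I anticipate is making the two directions of the image computation fully rigorous using the Demazure product. The forward containment $f(\phi^{-1}(w))\subseteq w\cdot X(v)$ is the delicate part: one must argue that a gallery of $\beta$-steps from $F$ to the fixed point $w$, coming from a reduced word $\ul\beta$ for $v$, forces $\dot w^{-1}F\in X(v)$, and the natural bookkeeping here is cleanest if one passes to $\ul\beta^{-1}$, a reduced word for $v^{-1}$, and reads the gallery from $w$ back to $F$. I would likely phrase this via the standard fact that for a reduced word, the associated Bott-Samelson map lands exactly in the Schubert variety, together with the $w$-translation bookkeeping, and check the codimensions match so that $f$ is genuinely birational rather than merely dominant onto a larger variety. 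The interplay $w=u\star v$ (rather than $w=uv$, which need not hold) is what guarantees the intersection is nonempty and of the expected dimension $\ell(u)+\ell(v)-\ell(w)+\dots$, i.e.\ equal to $\dim\phi^{-1}(w)=m$, and I would verify this dimension count as the sanity check underlying birationality.
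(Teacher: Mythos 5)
Your route is essentially the paper's: realize $\phi^{-1}(w)$ as a space of configurations (the paper packages this as the fiber product $X(\ul\alpha)\times_{G/B}w X(\ul\beta)$), show $f$ is an isomorphism over the locus where both cell conditions are generic, and invoke Lemma~\ref{l.smooth} for smoothness. However, two steps have genuine gaps. The first is the point you yourself call ``the delicate part,'' which you resolve with the wrong sign --- and the sign changes the statement. You parametrize $X(\ul\alpha,\ul\beta)$, putting $p_{k+j}\in P_{\beta_j}$, i.e.\ you concatenate with $\ul\beta$ in its original order, whereas the proposition concerns $X(\ul\alpha,\ul\beta^{-1})$. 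With your order, for $F=gB$ the existence of $q_j\in P_{\beta_j}$ with $g q_1\cdots q_\ell B=\dot wB$ says exactly that $g^{-1}\dot wB\in P_{\beta_1}\cdots P_{\beta_\ell}/B=X(v)$, i.e.\ $g^{-1}\dot w\in\overline{B\dot vB}$. Since inversion in $G$ interchanges $\overline{B\dot vB}$ and $\overline{B\dot v^{-1}B}$, this is equivalent to $\dot w^{-1}gB\in X(v^{-1})$, that is $F\in w\cdot X(v^{-1})$ --- \emph{not} to $\dot w^{-1}F\in X(v)$ as in your ``more careful'' restatement ($g^{-1}\dot w$ and $\dot w^{-1}g$ are inverses of each other, and $\overline{B\dot vB}$ is not inversion-stable). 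So with your conventions the image of $f$ is $X(u)\cap w\cdot X(v^{-1})$; your first formula ($\phi_k\in \dot w\cdot X(v^{-1})$) was the right one, and the reversal $\ul\beta^{-1}$ in the paper's setup exists precisely to convert this into $X(u)\cap w\cdot X(v)$. Note also that the choice of word interacts with Lemma~\ref{l.smooth}: that lemma applies to the fiber over the element indexing the \emph{image} of $\phi$, and the Demazure product of $(\ul\alpha,\ul\beta^{-1})$ is $u\star v^{-1}$, which in general differs from $u\star v$ (e.g.\ $u=s_1$, $v=s_1s_2$ in type $A_2$, where $u\star v=s_1s_2$ but $u\star v^{-1}=w_\circ$). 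The internally consistent pairings are: word $(\ul\alpha,\ul\beta)$ with $w=u\star v$ and image $X(u)\cap w\cdot X(v^{-1})$, or word $(\ul\alpha,\ul\beta^{-1})$ with $w=u\star v^{-1}$ and image $X(u)\cap w\cdot X(v)$; your proposal mixes the two.

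The second gap: birationality does not follow from exhibiting an isomorphism over the locus where $F$ and its translate lie in open cells. One must also know that this locus is nonempty and dense in $X(u)\cap w\cdot X(v)$, and that its preimage is dense in $\phi^{-1}(w)$; a priori either the fiber or the intersection could have irreducible components lying entirely over boundary strata, in which case $f$ would not be birational onto its image. This is exactly why the paper cites Lemma~\ref{l.smooth} for irreducibility as well as smoothness: the proof of that lemma exhibits $\phi^{-1}(X(w')^\circ)\cong\phi^{-1}(w')\times X(w')^\circ$ (with $w'$ the Demazure product of the whole word) as an open subset of the irreducible Bott-Samelson variety, so the fiber is irreducible; hence its image is irreducible and every nonempty open subset of either is dense, which is what upgrades ``isomorphism over an open set'' to ``birational onto the image.'' Your write-up invokes the lemma only for smoothness at the final step and asserts density without argument. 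Both defects are repairable --- use the reversed word (or replace $v$ by $v^{-1}$ consistently), check that your $w$ is the Demazure product of the word you actually use, and extract irreducibility from the product structure in the proof of Lemma~\ref{l.smooth} --- but as written the proposal does not establish the proposition.
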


\noindent
For the case $w=w_\circ$---that is, when $X(u)\cap w\cdot X(v)$ is a Richardson variety---a similar construction was given by Balan \cite{balan} (following a suggestion of Brion), and the geometry of the fiber $\phi^{-1}(w_\circ)$ was studied by Escobar \cite{escobar}.

\begin{proof}
It will be convenient to adopt a more symmetric point of view.  For any sequence of simple roots $\ul\gamma$, write
\[
  \XX(\ul\gamma) = G/B \times_{G/P_{\gamma_1}} \cdots \times_{G/P_{\gamma_d}} G/B,
\]
and let $\pr_0,\ldots,\pr_d$ be the projections onto the $d+1$ factors of $G/B$.  Thus $X(\ul\gamma) = \pr_0^{-1}(e)$, and the maps $\phi_i$ are the restrictions of $\pr_i$.  For any $z\in W$, let us write $zX(\ul\gamma) = \pr_0^{-1}(z)$.  The projection $\pr_d$ maps $zX(\ul\gamma)$ to $z\cdot X(w(\ul\gamma))$, a translated Schubert variety in $G/B$.

Observe that $\XX(\ul\alpha,\ul\beta^{-1}) = \XX(\ul\alpha)\times_{G/B} \XX(\ul\beta^{-1})$, and we can identify $\pr_0^{-1}(e) \cap \pr_{k+\ell}^{-1}(w)$ with $X(\ul\alpha) \times_{G/B} wX(\ul\beta)$.  (One must pay attention to how $\XX(\ul\beta^{-1})$ and $wX(\ul\beta)$ map to $G/B$ in the fiber products.)  Via the restriction of $\pr_k$, the open subset $X(\ul\alpha)^\circ \times_{G/B} wX(\ul\beta)^\circ$ maps isomorphically to the dense open subset $X(u)^\circ \cap w\cdot X(v)^\circ$.  On the other hand, we can also identify $\pr_0^{-1}(e) \cap \pr_{k+\ell}^{-1}(w)$ with $\phi^{-1}(w) \subseteq X(\ul\alpha,\ul\beta^{-1})$, which is smooth and irreducible by Lemma~\ref{l.smooth}.  \qed
\end{proof}

Combining this with the criterion for dense $B$-orbits (Proposition~\ref{p.dense}), we obtain the following.

\begin{corollary}
The Bott-Samelson variety $X=X(\ul\alpha,{\ul\beta^{-1}})$ has a dense $B$-orbit if $X(u) \cap w\cdot X(v)$ is a torus orbit closure in $G/B$.
\end{corollary}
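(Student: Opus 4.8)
The plan is to read the hypothesis through the desingularization of Proposition~\ref{p.resolution} and then invoke the reduction already established inside the proof of Proposition~\ref{p.dense}. Set $\tilde Y = \phi^{-1}(w) \subseteq X$ and $Y = X(u)\cap w\cdot X(v)$. By Proposition~\ref{p.resolution}, the map $f = \phi_k|_{\tilde Y}\colon \tilde Y \to Y$ is birational, and by Lemma~\ref{l.smooth} the variety $\tilde Y$ is smooth and irreducible; consequently $Y$, being the image of $\tilde Y$, is irreducible as well. The proof of Proposition~\ref{p.dense} shows that $X$ has a dense $B$-orbit provided $\tilde Y$ has a dense $T$-orbit, using $T\subseteq B_w$ together with the splitting $\phi^{-1}X(w)^\circ \isom X(w)^\circ \times \tilde Y$. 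Thus it suffices to exhibit a dense $T$-orbit in $\tilde Y$.

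First I would record that $f$ is $T$-equivariant: the map $\phi_k$ is $B$-equivariant, hence $T$-equivariant; the fiber $\tilde Y = \phi^{-1}(w)$ is $T$-invariant because $w$ is a $T$-fixed point of $G/B$; and its image $Y$ is a $T$-invariant subvariety. Under the standing hypothesis, $Y$ is the closure of a single torus orbit $O = T\cdot x$, which is therefore dense in $Y$.

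To conclude, I would lift $O$ across the birational morphism $f$. Choose dense open subsets $U\subseteq \tilde Y$ and $V\subseteq Y$ with $f|_U\colon U \xrightarrow{\sim} V$. Since $O$ and $V$ are both dense in the irreducible variety $Y$, the intersection $O\cap V$ is nonempty; pick $y\in O\cap V$ and let $\tilde y\in U$ be its unique preimage. By $T$-equivariance, $f(T\cdot\tilde y) = T\cdot y = O$, so the orbit $T\cdot\tilde y\subseteq\tilde Y$ surjects onto $O$. Hence $\dim(T\cdot\tilde y)\geq \dim O = \dim Y = \dim\tilde Y$, the last equality because $f$ is birational. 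A $T$-orbit of full dimension in the irreducible variety $\tilde Y$ is dense, so $\tilde Y$ has a dense $T$-orbit, and the corollary follows.

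The only point requiring care is the equivariance bookkeeping in the final step---that the dense orbit $O$ meets the isomorphism locus $V$ of $f$, so that a full-dimensional orbit lifts. This is immediate once $T$-equivariance and birationality are both in hand, so I expect no substantial obstacle: the content of the corollary is precisely the transport of the torus-orbit hypothesis along the desingularization $f$.
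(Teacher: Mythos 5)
Your proposal is correct and is essentially the paper's own (one-line) proof written out in full: Proposition~\ref{p.resolution} supplies the $T$-equivariant birational model $f\colon \phi^{-1}(w)\to X(u)\cap w\cdot X(v)$ with smooth irreducible source, your dimension argument lifts the dense torus orbit across $f$, and the reduction established in the proof of Proposition~\ref{p.dense} converts a dense $T$-orbit in the fiber into a dense $B$-orbit on $X$. The one tacit point (present equally in the paper) is that this last reduction applies to the fiber over the Demazure product of the concatenated word $(\ul\alpha,\ul\beta^{-1})$, which is $u\star v^{-1}$ rather than $w=u\star v$, since only then is $\phi^{-1}X(w)^\circ$ dense in $X$; these two elements coincide in the cases of interest (for instance when $v$ is an involution, or when $w=w_\circ$ and the intersection is nonempty), and Section~\ref{s.richardson} makes this identification implicitly.
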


\appendix
\section{Bott-Samelson varieties are log Fano}\label{s.logfano}

Let $X=X(\ul\alpha)$ be the Bott-Samelson variety corresponding to an arbitrary sequence of simple roots $\ul\alpha=(\alpha_1,\ldots,\alpha_d)$.  Define roots $\gamma_1,\ldots,\gamma_d$ by
\[
  \gamma_i = s_d s_{d-1} \cdots s_{i+1}(\alpha_i),
\]
and write $r_i = \langle \rho, \gamma_i^\vee \rangle$, where $\rho$ is the sum of the fundamental weights.  If the word $\ul\alpha$ is reduced, then all the roots $\gamma_i$ are positive and $r_i\geq 1$ for each $i$, but in general the $r_i$'s may be negative.  The anticanonical divisor has a well-known expression, as
\[
 -K_X = \sum_i (r_i + 1) X_i
\]
(see, e.g., \cite[Proof of Proposition~10]{mr}).  

A pair $(Y,D)$ of a normal irreducible variety and an effective $\Q$-divisor is \define{Kawamata log terminal (klt)} if $K_Y+D$ is $\Q$-Cartier, and for all proper birational maps $\pi\colon Y' \to Y$, the pullback $\pi^*(K_Y+D) = K_{Y'}+D'$ has $\pi_*K_{Y'}$ and $\lfloor D' \rfloor \leq 0$.  The pair is \define{log Fano} if it is klt and $-(K_Y+D)$ is ample.

If $Y$ is already nonsingular and $D$ has normal crossings, the pair $(Y,D)$ is log Fano if and only if $\lfloor D \rfloor =0$ and $-(K_Y+D)$ is ample.

We will see that the pair $(X,\Delta)$ is log Fano, for a suitably chosen divisor $\Delta = \sum_i \epsilon_i X_i$, independent of the characteristic of the ground field.  First we need an easy lemma.

\begin{lemma}
There are positive integers $a_1,\ldots,a_d$ such that the divisor $A = \sum_i a_i X_i$ is ample.
\end{lemma}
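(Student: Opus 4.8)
The plan is to induct on the length $d$, using the presentation of $X = X(\ul\alpha)$ as a tower of $\P^1$-bundles. The base case $d=1$ is immediate: $X(\alpha_1)\isom\P^1$ and $X_1$ is a single ($B$-fixed) point, which is ample, so $a_1=1$ works. For the inductive step I would write $Z = X(\alpha_1,\ldots,\alpha_{d-1})$, recall that $\pi\colon X\to Z$ is a $\P^1$-bundle, and assume that $Z$ already carries an ample divisor $A' = \sum_{i=1}^{d-1} a_i' X_i$ with all $a_i'$ positive integers.

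The key observation is that the last standard divisor $X_d\subseteq X$ coincides with the canonical section $\Delta$ of $\pi$ (as noted in the proof of Theorem~\ref{t.main}); in particular it meets every fiber of $\pi$ in a single reduced point, so $\O_X(X_d)$ has fiber degree $1$ and is therefore $\pi$-relatively ample. Combining this with the ampleness of $A'$ on the base, the standard fact that a relatively ample line bundle twisted by a sufficiently large pullback of an ample bundle from the base is again ample (e.g.\ \cite{hartshorne}) yields that $X_d + N\,\pi^*A'$ is ample on $X$ for all $N\gg 0$.

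It then remains to identify this divisor in the standard basis. Since $\pi$ is flat and the scheme-theoretic preimage of the standard divisor $X_i\subseteq Z$ (cut out by $p_i=e$) is exactly the standard divisor $X_i\subseteq X$, we have $\pi^*X_i = X_i$ for $i<d$. Hence $X_d + N\,\pi^*A' = X_d + \sum_{i=1}^{d-1} N a_i'\, X_i$, which is $\sum_{i=1}^d a_i X_i$ with $a_d=1$ and $a_i = N a_i'$ for $i<d$ --- all positive integers, completing the induction.

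The argument is manifestly independent of the characteristic, which is what the appendix needs. The only slightly delicate points are the relative-ampleness-plus-pullback criterion and the flat-pullback identity $\pi^*X_i = X_i$; both are routine once one recognizes that the section $X_d$ supplies exactly the relative positivity that lets the inductive scaling push all coefficients into the positive orthant simultaneously.
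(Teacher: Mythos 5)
Your proof is correct, but it follows a genuinely different route from the paper's. The paper's argument stays entirely inside the Picard lattice: it quotes the Lauritzen--Thomsen criterion that $\O(n_1,\ldots,n_d)$ is ample if and only if all $n_i>0$ \cite{lt}, and then exploits the uni-triangularity of the change of basis between the $\O_i(1)$-basis and the $X_i$-basis (Lemma~\ref{l.expand}) to run a top-down correction: starting from an ample $\O(n_1,\ldots,n_d)$, whenever the highest index $k$ with divisor coefficient $a_k\leq 0$ occurs, one replaces $n_k$ by $n_k-a_k+1$; uni-triangularity makes the new coefficient $a_k'=1$ without disturbing $a_{k+1},\ldots,a_d$, so finitely many steps land all divisor coefficients in the positive orthant while keeping all $n_i>0$. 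You instead induct on the tower of $\P^1$-bundles using only general positivity facts: $X_d=\Delta$ is a section of $\pi$, so $\O_X(X_d)$ has fiber degree one and is $\pi$-ample; $\pi^*X_i=X_i$ for $i<d$; and a $\pi$-ample bundle twisted by a large pullback of an ample bundle on the base is ample. What the paper's proof buys is brevity and explicit coefficient control, at the price of invoking the nontrivial Bott--Samelson-specific theorem of \cite{lt} (which the paper uses elsewhere in any case). What yours buys is self-containedness---nothing beyond standard relative-ampleness results from \cite{hartshorne} is needed---at the price of the two routine verifications you flag, both of which indeed hold: $\pi$ is a smooth morphism, so the scheme-theoretic preimage of the reduced irreducible divisor $X_i\subseteq Z$ is the reduced irreducible divisor $X_i\subseteq X$, giving $\pi^*X_i=X_i$; and a section of a $\P^1$-bundle meets every fiber in a single reduced point, so the restriction of $\O_X(X_d)$ to every fiber is $\O_{\P^1}(1)$, whence $\pi$-ampleness. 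The only practical difference in output is that your coefficients ($a_d=1$, $a_i=Na_i'$ for $i<d$) grow multiplicatively as one descends the tower, whereas the paper's recipe keeps them tied to the change-of-basis matrix; for the purposes of Theorem~\ref{t.logfano} either choice works equally well, and both arguments are characteristic-free.
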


\begin{proof}
This is almost obvious, but for concreteness here is a recipe for choosing the $a_i$.  Recall that $A$ is ample iff $\OO(A) = \OO(n_1,\ldots,n_d)$ with all $n_i>0$ \cite{lt}.  We have $n_d = a_d$, so this coefficient is positive.  Suppose $a_d,\ldots,a_{k+1}>0$ but $a_k\leq 0$, and let $A' = \sum_i a_i' X_i$ be such that $\OO(A') = \OO(n_1,\ldots,n_k-a_k+1,n_{k+1},\ldots,n_d)$.  Since the change of basis between the $\OO(1)$-basis and the divisor basis is uni-triangular, we have $a_k'=1$ and $a_i' = a_i$ for $k+1\leq i\leq d$.  Replace $A$ with $A'$ and continue inductively.\qed
\end{proof}

Given an ample divisor $A = \sum_i a_i X_i$ as in the lemma, choose a positive integer $M$ greater than all $a_i$, and let $\epsilon_i = 1 - a_i/M$.

\begin{theorem}\label{t.logfano}
The pair $(X,\Delta)$ is log Fano, where $\Delta = \sum_i \epsilon_i X_i$.
\end{theorem}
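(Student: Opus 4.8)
The plan is to verify the two conditions in the normal-crossings criterion recorded just above the theorem: since $X$ is nonsingular and $\Delta=\sum_i\epsilon_i X_i$ is supported on the normal crossings divisor $\sum_i X_i$, it suffices to check that $\lfloor\Delta\rfloor=0$ and that $-(K_X+\Delta)$ is ample. The floor condition is immediate from the choice of coefficients: each $a_i$ is a positive integer and $M>a_i$, so $0<\epsilon_i=1-a_i/M<1$, whence $\lfloor\epsilon_i\rfloor=0$ for every $i$ and $\lfloor\Delta\rfloor=0$. (In particular $\Delta$ is effective.) This step, like the snc criterion itself, is characteristic-free.

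For ampleness I would combine the anticanonical formula with the change-of-basis lemma. Using $-K_X=\sum_i(r_i+1)X_i$ and $1-\epsilon_i=a_i/M$, one computes
\[
  -(K_X+\Delta)=\sum_i\bigl(r_i+1-\epsilon_i\bigr)X_i=\sum_i r_i X_i+\frac1M\sum_i a_i X_i=\sum_i r_i X_i+\frac1M A.
\]
The term $\tfrac1M A$ is ample, since $A$ is ample and $M>0$, so it remains only to show that the class $\sum_i r_i X_i$ is nef; a nef class plus an ample class is ample, and we are done.

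To identify $\sum_i r_i X_i$ I would apply the second expression in Lemma~\ref{l.expand} with $\lambda=-\rho$ and $i=d$. Since $\gamma_j=s_d\cdots s_{j+1}(\alpha_j)$ gives $\gamma_j^\vee=s_d\cdots s_{j+1}(\alpha_j^\vee)$, the coefficients are $r_{dj}(-\rho)=\langle\rho,\,s_d\cdots s_{j+1}\alpha_j^\vee\rangle=\langle\rho,\gamma_j^\vee\rangle=r_j$, so that
\[
  \sum_j r_j X_j=\O_d(-\rho)=\phi^*\cL_{-\rho}.
\]
Thus the class is a pullback under $\phi=\phi_d\colon X\to G/B$, and it is nef as soon as $\cL_{-\rho}$ is nef on $G/B$.

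The main point, and the only genuine obstacle, is therefore to confirm that $\cL_{-\rho}$ is globally generated (hence nef) on $G/B$ in the sign convention of this paper, since the footnote convention reverses the usual positivity: here the globally generated bundles $\O_i(1)=\O_i(-\varpi_{\alpha_i})$ correspond to \emph{anti}-dominant weights. I would pin this down by writing $-\rho=\sum_\alpha(-\varpi_\alpha)$ over the simple roots and noting that each $\cL_{-\varpi_\alpha}$ is globally generated on $G/B$ (being the pullback of $\O(1)$ under $G/B\to G/P_\alpha$, in agreement with the Lauritzen--Thomsen criterion \cite{lt} read on the $\P^1$-bundle side), so that the tensor product $\cL_{-\rho}$ is globally generated as well. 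Pulling back by $\phi$ preserves nefness, giving that $\sum_i r_i X_i$ is nef and completing the proof that $-(K_X+\Delta)$ is ample, independently of the characteristic. Once the sign of $\cL_{-\rho}$ is settled, the argument is a short combination of the anticanonical formula, Lemma~\ref{l.expand}, and the principle that nef $+$ ample is ample.
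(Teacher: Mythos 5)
Your route is the same as the paper's: check $\lfloor\Delta\rfloor=0$, write $-(K_X+\Delta)=R+\frac{1}{M}A$ with $R=\sum_i r_iX_i$ the divisor of $\O_d(-\rho)$, identify $R$ via Lemma~\ref{l.expand}, and conclude by nef $+$ ample $=$ ample. Your arithmetic here is in fact the correct one (the paper's displayed coefficient $(r_i+\epsilon_i)$ is a typo for $(r_i+1-\epsilon_i)=(r_i+a_i/M)$), and your identification of $\sum_j r_jX_j$ with $\O_d(-\rho)$ via $\gamma_j^\vee=s_d\cdots s_{j+1}(\alpha_j^\vee)$ is right. The only place you go beyond what the paper asserts is the justification of the key positivity fact --- that $\cL_{-\rho}$ is nef on $G/B$ --- and that justification, which you yourself single out as ``the main point,'' is wrong as written.

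The error: $\cL_{-\varpi_\alpha}$ is \emph{not} pulled back under $G/B\to G/P_\alpha$. In this paper $P_\alpha$ is the \emph{minimal} parabolic, so $G/B\to G/P_\alpha$ is a $\P^1$-bundle, and a line bundle $\cL_\lambda$ descends to $G/P_\alpha$ only when $\langle\lambda,\alpha^\vee\rangle=0$; any pullback from $G/P_\alpha$ therefore has degree $0$ on the fibers. But by the degree formula quoted right after Lemma~\ref{l.expand}, $\cL_{-\varpi_\alpha}$ has degree $-\langle-\varpi_\alpha,\alpha^\vee\rangle=1$ on those fibers, so it cannot come from $G/P_\alpha$ (and ``$\O(1)$ on $G/P_\alpha$'' has no intrinsic meaning there). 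The repair is standard and is what the paper implicitly invokes: by Borel--Weil theory in this paper's sign convention (see Kumar's book for the Kac--Moody case), $\cL_\lambda$ on $G/B$ is globally generated if and only if $\lambda$ is anti-dominant and ample if and only if $\lambda$ is regular anti-dominant; hence $\cL_{-\rho}$ is already \emph{ample} on $G/B$, and its pullback $R=\phi_d^*\cL_{-\rho}$ is nef, which is exactly the paper's one-line argument. Alternatively, if you want a ``pullback of a positive generator'' picture for each $\cL_{-\varpi_\alpha}$, the correct fibration is $G/B\to G/Q_\alpha$ where $Q_\alpha$ is the \emph{maximal} parabolic generated by $B$ and the $P_\beta$ for $\beta\neq\alpha$: then $\cL_{-\varpi_\alpha}$ is the pullback of the ample generator of $\Pic(G/Q_\alpha)$, hence globally generated, and $\cL_{-\rho}=\bigotimes_\alpha\cL_{-\varpi_\alpha}$ is nef as you wanted. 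With either fix the rest of your proof goes through unchanged.
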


\begin{proof}
From the definition, it is clear that $\lfloor \Delta \rfloor = 0$, so we must check that $-K_X-\Delta$ is ample.  This is equal to
\[
  \sum_i (r_i + \epsilon_i) X_i = R + \frac{1}{M}A,
\]
where $R$ is the divisor associated to $\OO_d(-\rho)$, and $A$ is the ample divisor of the lemma.  Since $R$ is pulled back from an ample divisor on $G/B$, it is nef; therefore $R+(1/M)A$ is ample.\qed
\end{proof}

\begin{remark}
The theorem is an instance of the general fact that when $Y$ is a smooth variety such that $-K_Y = N+E$, with $N$ nef and $E = E_1 + \cdots + E_r$ a simple normal crossings divisor supporting an ample divisor, one can find a $\Q$-divisor $D$ supported on $E$ such that $(Y,D)$ is log Fano.  As with the analogous fact about Schubert varieties \cite{as}, our point here is to make this choice of $D$ explicit.
\end{remark}



\end{document}